\documentclass{elsarticle}

\usepackage[utf8]{inputenc}
\usepackage{amsmath, amssymb, amsthm, amsfonts}
\usepackage{bm}
\usepackage{mathtools}
\usepackage{stmaryrd}
\usepackage{caption, subcaption}
\usepackage{hyperref, cleveref}
\usepackage[titletoc]{appendix}
\usepackage[normalem]{ulem}
\usepackage{geometry}
\usepackage{algorithm}
\usepackage{algpseudocode}
\usepackage{listings}
\usepackage{tikz, graphicx}

\usepackage{multirow}

\usetikzlibrary{positioning}

\theoremstyle{definition}

\theoremstyle{plain}
\newtheorem{lemma}{Lemma}
\theoremstyle{plain}

\newtheorem{assumption}{Assumption}

\renewcommand{\hat}[1]{\widehat{#1}} 
\newcommand{\diff}[1]{\mathop{}\!{\mathrm{d}#1}} 
 
\newcommand{\pd}[2]{\frac{\partial#1}{\partial#2}}

\newcommand{\nor}[1]{\left\| #1 \right\|} 
\newcommand{\LRp}[1]{\left( #1 \right)} 
\newcommand{\LRs}[1]{\left[ #1 \right]} 
\newcommand{\LRa}[1]{\left\langle #1 \right\rangle} 
\newcommand{\LRb}[1]{\left| #1 \right|} 
\newcommand{\LRc}[1]{\left\{ #1 \right\}} 
\newcommand{\jump}[1] {\ensuremath{\left\llbracket#1\right\rrbracket}} 
\newcommand{\avg}[1] {\ensuremath{\LRc{\!\!\LRc{#1}\!\!}}}

\newtheorem{remark}{Remark}

\begin{document}

\begin{frontmatter}
\title{On the choice of viscous discontinuous Galerkin discretization for entropy correction artificial viscosity methods}
\author[rice]{Samuel Van Fleet}
\ead{sv73@rice.edu}
\author[Oden,ASE]{Jesse Chan}
\ead{jesse.chan@oden.utexas.edu}
\address[rice]{Department of Computational Applied Mathematics and Operations Research, Rice University}
\address[Oden]{Oden Institute for Computational Engineering and Sciences, The University of Texas-Austin}
\address[ASE]{Department of Aerospace Engineering \& Engineering Mechanics, The University of Texas-Austin}

\date{}

\begin{abstract}
Entropy correction artificial viscosity (ECAV) is an approach for enforcing a semi-discrete entropy inequality through an entropy dissipative correction term. The resulting method can be implemented as an artificial viscosity with an extremely small viscosity coefficient. In this work, we analyze ECAV when the artificial viscosity is discretized using a local discontinuous Galerkin (LDG) method. We prove an $O(h)$ upper bound on the ECAV coefficient, indicating that ECAV does not result in a restrictive time-step condition. We additionally show that ECAV is contact preserving, and compare ECAV to traditional shock capturing artificial viscosity methods. 
\end{abstract}
\end{frontmatter}


\section{Introduction} 

High order discontinuous Galerkin (DG) methods are often used to approximate solutions to time-dependent nonlinear conservation laws. High order approximations are efficient and accurate due to their small dissipation and dispersion error relative to low order methods. However, in the presence of under-resolved solutions such as shocks, high order methods can become unstable. 

Artificial viscosity is another popular technique for stabilization; an incomplete list of examples is \cite{berthon2023artificial, glaubitz2019smooth, guermond2011entropy, hennemann2021provably, lv2016entropy, klockner2011viscous, majda1979numerical, persson2006sub, vonneumann1950method} and the references therein. Many artificial viscosity approaches utilize shock capturing to detect regions where the solution is under-resolved and add artificial viscosity to suppress Gibbs-type oscillations. One drawback is that these methods often rely on parameters that need to be tuned for different problems. 

Entropy stable DG formulations were introduced in \cite{carpenter2014entropy, chen2017entropy, gassner2016split} to stabilize DG schemes without introducing heuristic stabilization parameters. The resulting methods are nodal DG formulations which use collocated Gauss-Lobatto quadrature nodes, resulting in mass and stiffness matrices which satisfy the summation by parts properties \cite{gassner2013skew}. In addition, these methods rely on a ``flux differencing'' framework, which reformulates a standard nodal DG method to resemble an algebraic finite volume formulation with central fluxes. To enforce entropy stability, these central fluxes are replaced with entropy conservative finite volume fluxes \cite{chandrashekar2013kinetic,  ismail2009affordable, ranocha2018comparison, tadmor1987numerical}. 
If a consistent two-point entropy stable numerical flux is used at the interfaces, the method satisfies a semi-discrete cell entropy inequality. These entropy stabilization techniques have also been extended to non-collocation formulations \cite{chan2018discretely, CICCHINO2025}.  

Flux differencing entropy stable DG formulations have been successfully applied to several challenging problems involving the compressible Navier-Stokes equations in \cite{PARSANI2021}, Magnetohydrodynamics (MHD) equations in \cite{BOHM2020, RUEDARAMIREZ2021}, the shallow water equations \cite{wintermeyer2017entropy, wintermeyer2018entropy}, and hypersonics \cite{OBLAPENKO2025, PEYVAN2023}.  However, they require the computation of an entropy conservative numerical flux, which can be expensive to evaluate and complex to derive.
Moreover, while the extension of such methods to more general discretizations is possible \cite{chan2018discretely, CICCHINO2025, keim2025}, the resulting entropy stable flux differencing formulation becomes more complicated.

Recently, entropy correction artificial viscosity (ECAV) methods were introduced in \cite{chan2025artificial,  christner2025entropystablefinitedifference, christner2025entropystablenodaldiscontinuous} as a simpler way of enforcing a cell entropy inequality. ECAV is closely related to the entropy correction terms from \cite{abgrall2018general, abgrall2022reinterpretation, edoh2024conservative, gaburro2023high, mantri2024fully}. In \cite{chan2025artificial} artificial viscosity determined based on the local violation of a cell entropy inequality. The artificial viscosity coefficients are locally computed over each element without parameters and were shown to be small for a number numerical examples involving the compressible Euler equations. The method is high-order accurate, satisfies the global semi-discrete entropy inequality, and has a large maximum stable time-step size when used in conjunction with explicit-time stepping schemes.

In this paper, we analyze the ECAV method from \cite{chan2025artificial}, which uses a BR-1 type scheme to discretize the artificial viscosity term.  BR-1 discretizations of a Laplacian result in spurious non-constant null space eigenmodes \cite{hesthaven2007nodal, sherwin20062d} which result in arbitrarily large artificial viscosity coefficients.  We show that when the artificial viscosity term is discretized using a local discontinuous Galerkin (LDG) formulation \cite{cockburn2007analysis, cockburn1998local}, it is bounded from above by $O(h)$. The resulting ECAV method results in smaller magnitude artificial viscosity coefficients and preserves the less restrictive hyperbolic CFL condition.

The organization of this paper is as follows: \Cref{sec:section_2} reviews nonlinear conservation laws and different forms of an entropy inequality, \Cref{sec:section_3} reviews a standard DG weak formulation for nonlinear conservation laws, as well as the LDG discretization of the entropy correction artificial viscosity term. In \Cref{sec:section_4}, we derive upper bounds on the artificial viscosity coefficients under LDG. In \Cref{sec:section_5} we present numeral experiments that validate the robustness, high-order accuracy, entropy stability, and upper bounds of the artificial coefficients of the method. We compare the ECAV method with BR-1 \cite{chan2025artificial} and LDG viscus discretizations, as well as with heuristic shock capturing approaches from \cite{hennemann2021provably, persson2006sub, PGSH2020}.

\section{Nonlinear conservation laws and entropy inequalities}\label{sec:section_2}

In this paper, we focus on the numerical approximation of solutions to systems of nonlinear conservation laws in $d$ spatial dimensions:
\begin{equation}
 \frac{ \partial \bm{u}}{\partial t} + \sum_{m=1}^d  \frac{\partial \bm{f}_m(\bm{u})}{\partial x_{m}} = 0,
\label{eq:ncl}
\end{equation}
where $\bm{u}(\bm{x},t) \in \mathbb{R}^n$ and $\bm{f}_m : \mathbb{R}^n \to \mathbb{R}^n$ are the flux functions. We assume that \eqref{eq:ncl} admits one or more entropy inequalities of the form
\begin{equation}
\frac{\partial S(\bm{u}) }{\partial t}+ \sum_{m=1}^d  \frac{ \partial F_m(\bm{u})}{\partial x_{m}} \le 0,
\label{eq:entropy inequality}
\end{equation}
where $S(\bm{u})$ is a scalar convex entropy and $F_m(\bm{u})$ are the associated entropy fluxes. The entropy variables are defined by 
\[ 
\bm{v}(\bm{u}) = \frac{\partial S}{\partial \bm{u}},
\]
and for sufficiently regular solutions, one can derive the equality version of \eqref{eq:entropy inequality} by multiplying \eqref{eq:ncl} by the entropy variables $\bm{v}$, using the chain rule and the following compatibility condition for the entropy and entropy fluxes
\begin{equation}\label{eqn:compatibility condition}
\bm{v}^T \frac{\partial \bm{f}_m}{\partial \bm{u}} = \frac{\partial F_m}{\partial \bm{u}}, \quad m = 1,...,d.
\end{equation}
Furthermore, the entropy fluxes can be written as
\begin{equation}\label{eqn:entropy flux decomposition}
F_m(\bm{u}) = \bm{v}(\bm{u})^T \bm{f}_m(\bm{u}) - \psi_m(\bm{u}), 
\end{equation}
where $\psi_m(\bm{u})$ are the entropy potentials. Differentiating \eqref{eqn:entropy flux decomposition} and applying \eqref{eqn:compatibility condition} yields
\begin{equation}
\frac{\partial \psi_m}{\partial \bm{u}}(\bm{u}) = 
\frac{\partial \bm{v}}{\partial \bm{u}}(\bm{u})^T \bm{f}_m(\bm{u}).
\label{eq:dpsi_rule}
\end{equation} 
The decomposition of the entropy flux into entropy variables and a potential term is standard in the design of entropy-conservative and entropy-stable numerical schemes \cite{tadmor2003entropy}. Finally, the convexity of $S(\bm{u})$ guarantees that the mapping between conservative variables $\bm{u}$ and entropy variables $\bm{v}$ is invertible, and that the system is symmetrizable \cite{dafermos2005hyperbolic, friedrichs1971systems}.

A vanishing viscosity argument yields the inequality version of \eqref{eq:entropy inequality} for more general classes of solutions \cite{chen2017entropy, dafermos2005hyperbolic, godlewski2013numerical}. In this work, we utilize an integrated cell version of \eqref{eq:entropy inequality} \cite{jiang1994cell}. Consider a closed domain $D \subset \mathbb{R}^d$ with boundary $\partial D$. Integrating \eqref{eq:entropy inequality} over $D$ and applying the divergence theorem yields
\begin{equation}\label{eq:cell entropy inequality}
    \int_D \frac{\partial S(\bm{u})}{\partial t} + \int_{\partial D} \sum_{m=1}^d (\bm{v}^T\bm{f}_m(\bm{u}) - \psi_m(\bm{u}))n_m \leq 0.
\end{equation}
We will enforce this cell entropy inequality \eqref{eq:cell entropy inequality} by enforcing an intermediate identity. To derive this intermediate inequality, first multiply \eqref{eq:ncl} by the entropy variables and integrate over some domain $D$. Integrating the spatial derivative term by parts yields
\begin{equation}
\int_{D} \pd{S(\bm{u})}{t} + \sum_{m=1}^d \LRs{\int_{D} -\pd{\bm{v}(\bm{u})^T}{x_m} \bm{f}_m(\bm{u}) + \int_{\partial D} \bm{v}(\bm{u})^T\bm{f}_m(\bm{u}) n_m} = 0.
\label{eq:entropy_identity_step}
\end{equation}
Subtracting \eqref{eq:cell entropy inequality} from \eqref{eq:entropy_identity_step} then yields
\begin{equation}
\sum_{m=1}^d \LRp{\int_{D}-\pd{\bm{v}(\bm{u})}{x_m}^T\bm{f}_m(\bm{u}) + \int_{\partial D} \psi_m(\bm{u}) n_m} \geq 0.
\label{eq:cell_entropy_identity}
\end{equation}
We note that for differentiable $\bm{u}$ and $\bm{f}_m(\bm{u})$, \eqref{eq:cell_entropy_identity} should be an \textit{equality} due to \eqref{eq:dpsi_rule}
\begin{equation}
\sum_{m=1}^d\LRp{\int_{D}-\pd{\bm{v}(\bm{u})}{x_m}^T\bm{f}_m(\bm{u}) + \int_{\partial D} \psi_m(\bm{u}) n_m} = 0.
\label{eq:cell_entropy_equality}
\end{equation}

\section{Local discontinuous Galerkin (LDG) discretization of artificial viscosity}\label{sec:section_3}

\subsection{High order DG formulation}
We assume that the domain $\Omega \subset \mathbb{R}^d$ is triangulated by non-overlapping simplical elements $D^k$, where each element $D^k$ is the image of a reference simplex $\hat{D}$ under some affine mapping $\phi^k: \hat{D} \rightarrow D^k$. Let $\bm{n}$ denote the outward normal vector $\bm{n} = \LRs{n_1,\dots,n_d}$ on each face of $D^k$.
Finally, let $(u,v)_{D^k}, \LRa{u,v}_{\partial D^k}$ denote the $L^2$ inner products on $D^k$ and the surface $\partial D^k$
\[
(u,v)_{D^k} \approx \int_{D^k} u(\bm{x})v(\bm{x}) \diff{x}, \qquad \LRa{u,v}_{\partial D^k} \approx \int_{\partial D^k} u(\bm{x})v(\bm{x}) \diff{x},
\]
where the approximate equality is due to the assumption that both volume and surface integrals in inner products are approximated using quadrature.

We consider a standard DG formulation for an approximate solution $\bm{u}_h$ on a single element $D^k$ for \eqref{eq:ncl} \cite{hesthaven2007nodal, karniadakis2005spectral}:
\begin{equation}
\LRp{\pd{\bm{u}_h}{t}, \bm{w}}_{D^k} + \sum_{m=1}^d \LRp{-\bm{f}_m(\bm{u}_h), \pd{\bm{w}}{x_m}}_{D^k} + \LRa{\bm{f}_n^*, \bm{w}}_{\partial D^k} = \bm{0}, \qquad \bm{w} \in \LRs{P^N(D^k)}^n,
\label{eq:dg_form}
\end{equation}
where $P^N(D^k)$ denotes the space of total degree $N$ polynomials on $D^k$. 
This formulation is derived by multiplying \eqref{eq:ncl} by a test function $\bm{w} \in \LRs{P^N(D^k)}^n$, integrating by parts, and introducing a numerical flux $\bm{f}^*_n$ across each inter-element interface. All volume and surface integrals are approximated using some quadrature rule, which will be specified later.

\subsection{Semi-discrete entropy estimate}

First, we observe that for systems of nonlinear conservation laws, the entropy variables $\bm{v}(\bm{u}_h)$ can be non-polynomial and do not necessarily lie in the test space $\LRs{P^N(D^k)}^n$. Thus, to derive a semi-discrete entropy estimate for \eqref{eq:dg_form}, we must instead test with the $L^2$ projection of the entropy variables $\Pi_N \bm{v}(\bm{u}_h)$ \cite{chan2018discretely, chan2025artificial}. Here, $\Pi_N$ denotes the $L^2$ projection operator onto $P^N(D^k)$ such that for $f\in L^2(D^k)$
\[
\LRp{\Pi_N f, v}_{D^k} = \LRp{f, v}_{D^k}, \qquad \forall v \in P^N(D^k).
\]
We note that similar observations on the non-polynomial nature of the entropy variables have been made by a variety of different groups in the literature \cite{andrews2024high, chan2022entropyprojection, colombo2022entropy, gkanis2021new, williams2019analysis}. 

Since $\pd{\bm{u}_h}{t} \in \LRs{P^N\LRp{D^k}}^n$ for method of lines discretizations, testing the time derivative term in \eqref{eq:dg_form} with the projected entropy variables yields
\begin{equation}
\LRp{\pd{\bm{u}_h}{t}, \Pi_N\bm{v}(\bm{u}_h)}_{D^k} = \LRp{\pd{\bm{u}_h}{t}, \bm{v}(\bm{u}_h)}_{D^k} = \LRp{\pd{S(\bm{u}_h)}{t}, 1}_{D^k},
\label{eq:dSdt}
\end{equation}
where we have used the chain rule in time for the final step. Note that this equality still holds under inexact quadrature, so long as the $L^2$ projection operator $\Pi_N$ is defined using the same inexact quadrature rule. 

Considering \eqref{eq:dg_form} tested with the projected entropy variables and using \eqref{eq:dSdt}, we recover the semi-disrete local rate of change of entropy of $D^k$ 

\begin{equation}
\LRp{\pd{S(\bm{u}_h)}{t}, 1}_{D^k} + \sum_{m=1}^d \LRp{-\bm{f}_m(\bm{u}_h), \pd{\Pi_N\bm{v}(\bm{u}_h)}{x_m}}_{D^k} + \LRa{\bm{f}_n^*, \Pi_N\bm{v}(\bm{u}_h)}_{\partial D^k} = 0.
\label{eq:local_dSdt}
\end{equation}
Due to the presence of the projected entropy variables and the fact that the inner products are approximated using quadrature, integration by parts, the chain rule, cannot be used to reproduce the proof of the continuous entropy (in)equality at the semi-discrete level from the DG formulation  \eqref{eq:local_dSdt}. In \cite{chan2025artificial}, a Laplacian artificial viscosity term is added to the high order DG method and discretized using BR-1 to recover an entropy inequality. In this work we discretize the artificial viscosity term with local DG \cite{cockburn2007analysis, cockburn1998local} instead, which allows us to derive an upper bound on the artificial viscosity coefficient that is not achieved under a BR-1 viscous discretization.


\subsection{An entropy dissipative viscous LDG formulation}

We consider adding the following Laplacian artificial viscosity term to \eqref{eq:ncl}
\begin{equation}\label{eq: monolithic viscous regularization}
    \pd{\bm{u}}{t} + \sum_{m=1}^d \pd{\bm{f}_m(\bm{u})}{x_m} = \sum_{i,j = 1}^d \pd{}{x_i}\left(\epsilon_k (\bm{u})\pd{\bm{u}}{x_j} \right),
\end{equation}
where $\epsilon_k(\bm{u}) \geq 0$.  Applying the chain rule, \eqref{eq: monolithic viscous regularization} can be written as:
\begin{equation}\label{eq: chain rule regularization}
    \pd{\bm{u}}{t} + \sum_{m=1}^d \pd{\bm{f}_m(\bm{u})}{x_m} = \sum_{i = 1}^d \pd{}{x_i}\left(\epsilon_k (\bm{u})\pd{\bm{u}}{\bm{v}}\pd{\bm{v}}{x_i} \right).
\end{equation}
By the convexity of $S(\bm{u})$, the Jacobian matrix $\pd{\bm{u}}{\bm{v}}$ is symmetric and positive definite.  We will analyze the slightly more general viscous regularization:
\begin{equation}\label{eq: general viscous regularization}
\pd{\bm{u}}{t} + \sum_{m=1}^d \pd{\bm{f}_m(\bm{u})}{x_m} = \sum_{i,j = 1}^d \pd{}{x_i}\left(\epsilon_k (\bm{u})\bm{K}_{ij}\pd{\bm{v}}{x_j} \right),
\end{equation} 
where $\bm{K}_{ij}$ denotes blocks of a symmetric and positive semi-definite matrix $\bm{K}$

$$
\bm{K} = 
\begin{bmatrix}
    \bm{K}_{11} & \cdots & \bm{K}_{1d} \\
    \vdots      & \ddots & \vdots\\
    \bm{K}_{d1} & \cdots & \bm{K}_{dd}
\end{bmatrix} = \bm{K}^T, \quad \bm{K} \succeq 0.
$$
For example, taking $\bm{K}_{ij} = \delta_{ij} \pd{\bm{u}}{\bm{v}}$ recovers \eqref{eq: chain rule regularization}.  We discretize \eqref{eq: general viscous regularization} by adding $\bm{g}_{\text{visc}}$ to the DG formulation 
\begin{equation}\label{eq: viscous DG formulation}
\LRp{\pd{\bm{u}_h}{t}, \bm{w} }_{D^k} + \sum_{m=1}^d \LRp{-\bm{f}_m(\bm{u}_{h}), \pd{\bm{w}}{x_m}}_{D^k} + \LRa{ \bm{f^{*}}_n,\bm{w}}_{\partial D^k} = \LRp{\bm{g}_{\text{visc}}, \bm{w}}_{D^k},
\end{equation}
where the viscous terms $\bm{g}_{\text{visc}}$ are discretizations of \eqref{eq: general viscous regularization} using a local DG discretization.  Denoting $\bm{v}_h = \Pi_N\bm{v}(\bm{u}_h)$, the viscous terms $\bm{g}_{\text{visc}}$ are given for $i = 1, \dots, d$ by
\begin{gather}
    \label{eq: LDG 1}
    \LRp{\bm{\Theta}_i, \bm{w}_{1,i}}_{D^k} = \LRp{-\bm{v}_h,\pd{\bm{w}_{1,i}}{x_i}}_{D^k} + \LRa{ \left(\avg{\bm{v}_h} - \frac{\beta}{2}\jump{\bm{v}_h}\right)n_i,\bm{w}_{1,i} }_{\partial D^k}, \quad \forall \bm{w}_{1,i} \in \left[P^N(D^k) \right]^n\\
    \label{eq: LDG 2}
    \left(\bm{\sigma}_i, \bm{w}_{2,i} \right)_{D^k} = \left(\sum_{j=1}^d \epsilon_k(\bm{u}_h)\bm{K}_{ij} \bm{\Theta}_j, \bm{w}_{2,i} \right)_{D^k},\quad \forall \bm{w}_{2,i} \in \left[P^N(D^k) \right]^n\\
    \label{eq: LDG 3}
    \left(\bm{g}_{\text{visc}},\bm{w}_3 \right)_{D^k} = \sum_{i=1}^d \left[\left(-\bm{\sigma}_i,\pd{\bm{w}_3}{x_i} \right)_{D^k} + \left\langle\left(\avg{\bm{\sigma}_i} + \frac{\beta}{2} \jump{\bm{\sigma_i}}\right)n_i,\bm{w}_3 \right\rangle_{\partial D^k}\right], \quad \forall \bm{w}_{3} \in \left[P^N(D^k) \right]^n.
\end{gather}
Here, $\jump{\cdot}$ and $\avg{\cdot}$ denote the jump and average operations:
\[
\jump{u} = u^+ - u^-, \qquad  \avg{u} = \frac{1}{2}\LRp{u^+ + u^-},
\]
where $u^-$ denotes the interior value of $u$ on a face of $D^k$, and $u^+$ denotes the exterior (neighboring) value of $u$ across the same face. Here, $\bm{\Theta}_i$ are DG approximations of derivatives of the entropy variables $\bm{v}(\bm{u}_h)$ with respect to the $i$th coordinate. In \eqref{eq: LDG 2}, we compute the viscous fluxes $\bm{\sigma}_i$ as the $L^2$ projection of $\sum_{j=1}^d \bm{K}_{ij}\bm{\Theta}_j$ for $i = 1,\ldots, d$ onto the approximation space of each element. The viscous terms  $\bm{g}_{\rm visc}$ are the result of computing the divergence of the viscous fluxes in \eqref{eq: LDG 3}. 

Additionally, let $\beta$ be LDG parameters which are constant over each face. In this work, we define $\beta = {\rm  sign}(\bm{v}_0 \cdot \bm{n})$, where $\bm{v}_0 \in \mathbb{R}^d$ is some fixed vector \cite{cockburn2007analysis}. Setting $\beta = 0$ recovers the BR-1 formulation. We note that LDG can also be equivalently defined in terms of binary ``switches''  \cite{peraire2008compact}; both definitions of $\beta$ fall under the general formulation in \cite{cockburn1998local} and yield the same energy estimates.

The following lemma characterizes the global entropy dissipation estimate satisfied by \eqref{eq: viscous DG formulation}. The proof is similar to proofs given in \cite{, chan2025artificial, chan2022entropy}, and is included for completeness.
\begin{lemma}
\label{lemma:ldg_dissipation}
    Let $\bm{g}_{\text{visc}}$ be given by \eqref{eq: LDG 1}, \eqref{eq: LDG 2}, and \eqref{eq: LDG 3}. Then, for a periodic domain,
    \begin{equation*}
        \sum_k -\left(\bm{g}_{\text{visc}}, \Pi_N \bm{v}(\bm{u}_h) \right)_{D^k} = \sum_k\sum_{i,j = 1}^d \left(\epsilon_k(\bm{u}_h)\bm{K}_{ij}\bm{\Theta}_j,\bm{\Theta}_i \right)_{D^k} \geq 0.    \end{equation*}
        where $\Pi_N\bm{v}(\bm{u}_h)$ denotes the $L^2$ projection of the entropy variables.
\end{lemma}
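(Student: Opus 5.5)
The plan is to test each of the three LDG equations with a carefully chosen polynomial test function, then sum over elements so that the interface terms cancel. Write $\bm{v}_h = \Pi_N\bm{v}(\bm{u}_h)$ as in the paper.

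\textbf{Step 1: test \eqref{eq: LDG 3} with $\bm{w}_3 = \bm{v}_h$.} This is admissible because $\bm{v}_h \in [P^N(D^k)]^n$. It gives
\[
-\LRp{\bm{g}_{\text{visc}}, \bm{v}_h}_{D^k} = \sum_{i=1}^d \LRs{ \LRp{\bm{\sigma}_i, \pd{\bm{v}_h}{x_i}}_{D^k} - \LRa{ \LRp{\avg{\bm{\sigma}_i} + \tfrac{\beta}{2}\jump{\bm{\sigma}_i}} n_i, \bm{v}_h }_{\partial D^k} }.
\]

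\textbf{Step 2: test \eqref{eq: LDG 1} with $\bm{w}_{1,i} = \bm{\sigma}_i$.} This is admissible because $\bm{\sigma}_i$ is polynomial, being an $L^2$ projection. We want to trade the volume term $\LRp{\bm{\sigma}_i, \partial \bm{v}_h/\partial x_i}$ for $\LRp{\bm{\Theta}_i,\bm{\sigma}_i}$, and this needs discrete integration by parts on each element:
\[
\LRp{\bm{\sigma}_i, \pd{\bm{v}_h}{x_i}}_{D^k} = -\LRp{\bm{v}_h, \pd{\bm{\sigma}_i}{x_i}}_{D^k} + \LRa{\bm{v}_h^- n_i, \bm{\sigma}_i^-}_{\partial D^k}.
\]
This identity holds if the volume quadrature is exact for degree $2N-1$ polynomials and the surface quadrature is exact for degree $2N$ on faces. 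I will assume this quadrature condition, or equivalently a summation-by-parts property.

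\textbf{Step 3: combine.} Equation \eqref{eq: LDG 1} then gives
\[
\LRp{\bm{\sigma}_i,\pd{\bm{v}_h}{x_i}}_{D^k} = \LRp{\bm{\Theta}_i,\bm{\sigma}_i}_{D^k} + \LRa{\bm{v}_h^- n_i - \LRp{\avg{\bm{v}_h} - \tfrac{\beta}{2}\jump{\bm{v}_h}} n_i, \bm{\sigma}_i^-}_{\partial D^k}.
\]
Next, test \eqref{eq: LDG 2} with $\bm{w}_{2,i} = \bm{\Theta}_i$ to get $\LRp{\bm{\sigma}_i,\bm{\Theta}_i}_{D^k} = \sum_j \LRp{\epsilon_k \bm{K}_{ij}\bm{\Theta}_j, \bm{\Theta}_i}_{D^k}$. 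Summing over $i$ produces the claimed volume term. Its nonnegativity follows pointwise at each quadrature node: $\epsilon_k \ge 0$, $\bm{K} \succeq 0$, and the quadrature weights are positive, so $\sum_{i,j} \bm{\Theta}_i^T \bm{K}_{ij}\bm{\Theta}_j \ge 0$.

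\textbf{Step 4: the interface terms (the main obstacle).} It remains to show that the surface contributions vanish when summed over all elements of a periodic mesh. On a shared face, write $a^\pm$ for traces and $n^+ = -n^-$. Each element contributes
\[
\LRa{\bm{v}_h^- n_i^- - \widehat{\bm{v}}\, n_i^-,\ \bm{\sigma}_i^-} - \LRa{\widehat{\bm{\sigma}}_i n_i^-,\ \bm{v}_h^-},
\]
with $\widehat{\bm{v}} = \avg{\bm{v}_h} - \tfrac{\beta}{2}\jump{\bm{v}_h}$ and $\widehat{\bm{\sigma}}_i = \avg{\bm{\sigma}_i} + \tfrac{\beta}{2}\jump{\bm{\sigma}_i}$.

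The key point is that these numerical fluxes are single-valued across the face. Under the exchange of $\pm$ the jump flips sign, but $\beta = \mathrm{sign}(\bm{v}_0\cdot\bm{n})$ also flips sign, so the product $\beta\jump{\cdot}$ is invariant. Using single-valuedness, the sum of the two sides reduces to
\[
\LRa{\jump{\bm{v}_h^T\bm{\sigma}_i} - \widehat{\bm{v}}^T\jump{\bm{\sigma}_i} - \widehat{\bm{\sigma}}_i^T\jump{\bm{v}_h},\ n_i^-},
\]
up to an overall sign. Then apply the product rule $\jump{ab} = \avg{a}\jump{b} + \jump{a}\avg{b}$. The $\beta$ contributions cancel, because $-\tfrac{\beta}{2}\jump{\bm{v}_h}^T\jump{\bm{\sigma}_i}$ from $\widehat{\bm{v}}$ and $+\tfrac{\beta}{2}\jump{\bm{\sigma}_i}^T\jump{\bm{v}_h}$ from $\widehat{\bm{\sigma}}_i$ enter with opposite signs, leaving zero.

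This sign bookkeeping, together with the requirement that the face quadrature be shared by both neighbors, is the delicate part; everything else is routine. Periodicity guarantees that every face is interior, so no boundary terms remain, which gives the stated equality and hence the inequality.
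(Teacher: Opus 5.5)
Your proof is correct and follows the paper's route: you test the three LDG equations with $\bm{\sigma}_i$, $\bm{\Theta}_i$ and $\bm{v}_h$, integrate by parts elementwise, and cancel the interface terms on the periodic mesh using that $n_i$, $\jump{\cdot}$ and $\beta$ all flip sign across a face. Your bookkeeping via single-valued fluxes and $\jump{ab} = \avg{a}\jump{b} + \jump{a}\avg{b}$ is cleaner than the paper's split-and-swap computation, and you state explicitly the quadrature-exactness (or SBP) assumption that the paper's integration by parts uses without saying so.
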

\begin{proof}  
    Let $\bm{w}_{1,i} = \bm{\sigma}_i$, integrate the volume term in \eqref{eq: LDG 1} by parts, let $\bm{w}_{2,i} = \Theta_i$, $\bm{w}_3 = \bm{v}_h$, and sum \eqref{eq: LDG 1} and \eqref{eq: LDG 2} from $1$ to $d$ to get
    \begin{gather}
        \label{eq: LDG 1b}
        \sum_{i=1}^d\left(\bm{\Theta}_i, \bm{\sigma}_i \right)_{D^k} = \sum_{i=1}^d\left(\pd{\bm{v}_h}{x_i},\bm{\sigma}_i \right)_{D^k} + \left\langle \frac{1}{2}\left(\jump{\bm{v}_h}(1-\beta) \right)n_i,\bm{\sigma}_i \right\rangle_{\partial D^k}, \\
        \label{eq: LDG 2b}
        \sum_{i=1}^d\left(\bm{\sigma}_i, \bm{\Theta}_i \right)_{D^k} = \left(\sum_{i,j=1}^d \epsilon_k(\bm{u}_h)\bm{K}_{ij} \bm{\Theta}_j, \bm{\Theta}_i \right)_{D^k},\\
        \label{eq: LDG 3b}
        \left(\bm{g}_{\text{visc}},\bm{v}_h \right)_{D^k} = \sum_{i=1}^d \left[\left(-\bm{\sigma}_i,\pd{\bm{v}_h}{x_i} \right)_{D^k} + \left\langle\left(\avg{\bm{\sigma}_i} + \frac{\beta}{2} \jump{\bm{\sigma}_i}\right)n_i,\bm{v}_h \right\rangle_{\partial D^k}\right].
    \end{gather}
    Substituting \eqref{eq: LDG 1b} into \eqref{eq: LDG 3b} and summing over $D^k$ shows that $\sum_{k}\left(\bm{g}_{\text{visc}}, \bm{v}_h\right)_{D^k}$ is equal to
    \begin{align*}        
        &\sum_k \sum_{i,j = 1} \left( -\bm{\Theta}_i,\bm{\sigma}_i\right)_{D^k} + \left\langle \left(\frac{1}{2}\jump{\bm{v}_{h}}(1-\beta)\right)n_i, \bm{\sigma}_i \right\rangle_{\partial D^k}
        + \left\langle \left(\avg{\bm{\sigma}_i} + \frac{\beta}{2}\jump{\bm{\sigma}_i}\right)n_i, \bm{v}_h \right\rangle_{\partial D^k}
    \end{align*}
    Summing \eqref{eq: LDG 2b} over the elements and substituting into the equation above we have
    \begin{equation}\label{eq: LDG disc}
    \begin{aligned}
        &\sum_{k}\LRp{\bm{g}_{\text{visc}}, \bm{v}_h}_{D^k} = \sum_k \sum_{i,j = 1}-\LRp{\epsilon_k(\bm{u}_h)\bm{K}_{ij} \bm{\Theta}_j ,\bm{\Theta}_i}_{D^k}\\
        &  + \left\langle \left(\frac{1}{2}\jump{\bm{v}_{h}}(1-\beta)\right)n_i, \bm{\sigma}_i \right\rangle_{\partial D^k}
        + \left\langle \left(\{\{\bm{\sigma}_i\}\} + \frac{\beta}{2}\jump{\bm{\sigma}_i}\right)n_i, \bm{v}_h \right\rangle_{\partial D^k}
    \end{aligned}
    \end{equation}
For periodic boundary conditions, all faces are ``interior" faces share by two elements.  We split the contributions from each surface term and swap them between $D^k$ and the neighboring element $D^{k,+}$, and rearrange the boundary terms noting that $n_i$, $\jump{\bm{v}_h}$, and $\beta$ change sign between $D^k$ and $D^{k,+}$.
The first surface term from \eqref{eq: LDG disc} is
\begin{equation*}
\begin{aligned}
    \frac{1}{2}&\sum_k\left\langle \left(\jump{\bm{v}_{h}}(1-\beta)\right)n_i, \bm{\sigma}_i \right\rangle_{\partial D^k}  \\ 
    & = \frac{1}{2} \sum_k\left(\frac{1}{2}\left\langle \jump{\bm{v}_h}(1 - \beta)n_i,\bm{\sigma}_i \right\rangle_{\partial D^k} + \left\langle \frac{1}{2}\jump{\bm{v}_h}(1-\beta)n_i,\bm{\sigma}_i \right\rangle_{\partial D^{k,+}}\right)\\
    & = \frac{1}{2} \sum_{k} \left\langle \left(\avg{\bm{\sigma}_i} + \frac{\beta}{2}\jump{ \bm{\sigma}_i}\right)n_i,\jump{ \bm{v}_h}\right\rangle
\end{aligned}
\end{equation*}
The second surface term from \eqref{eq: LDG disc} is
\begin{equation*}
    \begin{aligned}
        & \sum_k \left\langle \left(\avg{\bm{\sigma}_i} + \frac{\beta}{2}\jump{\bm{\sigma}_i}\right)n_i, \bm{v}_h \right\rangle_{\partial D^k}\\
        &= \frac{1}{2} \sum_k \left(\left\langle \left(\avg{\bm{\sigma}_i} + \frac{\beta}{2}\jump{\bm{\sigma}_i}\right)n_i,\bm{v}_h\right\rangle_{\partial D^k} + \left\langle \left(\avg{\bm{\sigma}_i} + \frac{\beta}{2}\jump{\bm{\sigma}_i}\right)n_i,\bm{v}_h\right\rangle_{\partial D^{k,+}}\right)\\
        &= -\frac{1}{2} \sum_k \left\langle \left(\avg{\bm{\sigma}_i} + \frac{\beta}{2}\jump{\bm{\sigma}_i}\right)n_i,\frac{1}{2}\jump{\bm{v}_h}\right\rangle_{\partial D^k}.
    \end{aligned}
\end{equation*}
Thus, the two surface terms cancel and by the positive semi-definiteness of $\bm{K}_{ij}$, we have
\begin{equation}
-\sum_{k}\LRp{\bm{g}_{\text{visc}}, \bm{v}_h}_{D^k} = \sum_k \sum_{i,j = 1}\LRp{\epsilon_k(\bm{u}_h)\bm{K}_{ij} \bm{\Theta}_j ,\bm{\Theta}_i}_{D^k} \leq 0.
\end{equation}
\end{proof} 

Note that the LDG dissipation estimate in Lemma~\ref{lemma:ldg_dissipation} is exactly the same as the dissipation estimate for BR-1. Thus, Lemma 2 in \cite{chan2025artificial} holds for both BR-1 and LDG, which we restate without proof below:
\begin{lemma}[Lemma 2 in \cite{chan2025artificial}]
\label{lemma: entropy decay}
    Let $\epsilon_{k}(\bm{u}_h)$ on $D^k$ satisfy
    \begin{equation}
        \sum_{i,j = 1}^d\left(\epsilon_k(\bm{u}_h)\bm{K}_{ij}\bm{\Theta}_{j},\bm{\Theta}_i \right)_{D^k} \geq -\min{\left(0,\delta_k(\bm{u}_h)\right)},
    \end{equation}
    where the volume entropy residual $\delta_k(\bm{u}_h)$ is
    \begin{equation}\label{eqn: semi-discrete volume entropy residual}
        \delta_k(\bm{u}_h) = \sum_{m = 1}^d\left[\LRp{-\bm{f}_m(\bm{u}_h),\pd{\Pi_N\bm{v}_h}{x_m}}_{D_k} + \LRa{\psi_m(\tilde{\bm{u}})n_m,1}_{\partial D_k} \right].
    \end{equation}
    Let $\bm{\tilde{u}}$ denotes the entropy projection $\tilde{\bm{u}} = \bm{u}(\Pi_N \bm{v}(\bm{u}_h)$, and assume the numerical flux $\bm{f}^*_n$ is evaluated in terms of the entropy projection. Then, \eqref{eq: viscous DG formulation} satisfies the following global entropy inequality:
    \begin{equation}
        \sum_k\LRs{\LRp{\pd{S(\bm{u}_h)}{t},1}_{D^k} + \LRa{\LRp{\Pi_N\bm{v}(\bm{u}_h)}^T\bm{f}^{*}_n - \sum_{m=1}^d\psi_{m}(\tilde{\bm{u}})n_m,1}_{\partial D^k}} \leq 0.
    \end{equation}
\end{lemma}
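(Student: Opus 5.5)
The plan is to test the viscous DG formulation \eqref{eq: viscous DG formulation} on each element $D^k$ with $\bm{w} = \Pi_N\bm{v}(\bm{u}_h)$, sum over elements, and then treat the hyperbolic part and the viscous part separately.

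\emph{Time derivative.} By \eqref{eq:dSdt}, the time derivative term becomes $\LRp{\pd{S(\bm{u}_h)}{t},1}_{D^k}$.

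\emph{Hyperbolic volume term.} I would add and subtract the entropy potential boundary term $\LRa{\sum_m \psi_m(\tilde{\bm{u}})n_m,1}_{\partial D^k}$. This rewrites the volume flux contribution as $\delta_k(\bm{u}_h) - \LRa{\sum_m\psi_m(\tilde{\bm{u}})n_m,1}_{\partial D^k}$. The resulting per-element identity is
\begin{equation*}
\LRp{\pd{S(\bm{u}_h)}{t},1}_{D^k} + \LRa{\LRp{\Pi_N\bm{v}(\bm{u}_h)}^T\bm{f}^*_n - \sum_{m=1}^d\psi_m(\tilde{\bm{u}})n_m,1}_{\partial D^k} = -\delta_k(\bm{u}_h) + \LRp{\bm{g}_{\text{visc}},\Pi_N\bm{v}(\bm{u}_h)}_{D^k}.
\end{equation*}
This step uses that $\Pi_N\bm{v}(\bm{u}_h)$ coincides with $\bm{v}(\tilde{\bm{u}})$ at the surface quadrature points. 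That holds by the definition of the entropy projection, since $\bm{v}(\bm{u}(\Pi_N\bm{v})) = \Pi_N\bm{v}$. So the surface flux terms are expressed entirely in terms of $\tilde{\bm{u}}$, which is the quantity the numerical flux uses.

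\emph{Viscous term.} Summing over $k$ and applying Lemma~\ref{lemma:ldg_dissipation} gives
\begin{equation*}
\sum_k\LRp{\bm{g}_{\text{visc}},\Pi_N\bm{v}(\bm{u}_h)}_{D^k} = -\sum_k\sum_{i,j=1}^d\LRp{\epsilon_k(\bm{u}_h)\bm{K}_{ij}\bm{\Theta}_j,\bm{\Theta}_i}_{D^k}.
\end{equation*}
The left-hand side of the claimed inequality therefore equals
\begin{equation*}
\sum_k\LRs{-\delta_k(\bm{u}_h) - \sum_{i,j=1}^d\LRp{\epsilon_k(\bm{u}_h)\bm{K}_{ij}\bm{\Theta}_j,\bm{\Theta}_i}_{D^k}}.
\end{equation*}

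\emph{Elementwise bound.} The last step is to show each bracket is nonpositive, using the hypothesis on $\epsilon_k$ and a two-case split.
\begin{itemize}
\item If $\delta_k \ge 0$: the dissipation term is nonnegative by $\bm{K}\succeq 0$ and $\epsilon_k\ge0$, so the bracket is $\le -\delta_k \le 0$.
\item If $\delta_k<0$: the assumed bound gives dissipation $\ge -\delta_k$, so the bracket is again $\le 0$.
\end{itemize}

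\emph{Main obstacle.} The delicate point is the second step, the bookkeeping of the entropy projection in the surface terms. One must check that the quadrature-based inner products make the substitution $\Pi_N\bm{v} = \bm{v}(\tilde{\bm{u}})$ valid exactly on the faces. One must also check that no additional interface contribution is needed. This is fine because the statement's left-hand side keeps the interface terms $\LRp{\Pi_N\bm{v}}^T\bm{f}^*_n - \sum_m\psi_m(\tilde{\bm{u}})n_m$ explicitly rather than requiring them to telescope. Controlling those interface terms with an entropy stable flux is a separate matter that the statement does not ask for.
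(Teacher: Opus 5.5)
Your argument is correct and is essentially the route the paper relies on. The paper restates this lemma without proof, observing that because the LDG dissipation identity of Lemma~\ref{lemma:ldg_dissipation} coincides with the BR-1 one, the argument of \cite{chan2025artificial} carries over unchanged: test with $\Pi_N\bm{v}(\bm{u}_h)$, use \eqref{eq:dSdt}, add and subtract the $\psi_m(\tilde{\bm{u}})$ boundary term to produce $\delta_k(\bm{u}_h)$, apply the dissipation identity (on a periodic domain, as Lemma~\ref{lemma:ldg_dissipation} assumes), and bound elementwise via the hypothesis on $\epsilon_k$. The step you flag as the main obstacle is in fact a pure add-and-subtract that never uses $\bm{v}(\tilde{\bm{u}}) = \Pi_N\bm{v}(\bm{u}_h)$ on the faces; that identity and the assumption on $\bm{f}^*_n$ only matter for interpreting the retained interface terms, not for this inequality.
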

We note that under nodal collocation of both volume and surface quadratures, the evaluation of the entropy projection $\tilde{\bm{u}}$ reduces to the evaluation of the polynomial DG solution $\bm{u}_h$ at interfaces. 

Finally, a simple expression for the ECAV coefficient can be given by assuming that $\epsilon_{k}(\bm{u}_h)$ is constant over each element:
\begin{equation}
\epsilon_{k}(\bm{u}_h) = \frac{-\min(0, \delta_k(\bm{u}_h)}{\sum_{i,j=1}^d \LRp{\bm{K}_{ij}\bm{\Theta}_j, \bm{\Theta}_i}_{D^k}}, 
\label{eq:ECAV_coeff}
\end{equation}
where we approximate the ratio above via 
\begin{equation}
\frac{a}{b} \approx \frac{ab}{b^2 + \delta}.
\label{eq:regularized_ratio}
\end{equation}
Unless otherwise specified, we arbitrarily take $\delta = 10^{-14}$ for the experiments in this work. 

\section{Properties of ECAV under LDG}\label{sec:section_4}

The main contributions of this paper are to provide an upper bound based on \eqref{eq:ECAV_coeff} and properties of the LDG discretization. The motivation for the use of the LDG discretization in \eqref{eq: LDG 1b} is the availability of estimates for the gradient which do not hold for BR-1. These estimates enable an upper bound on the ECAV coefficient. 

\subsection{Estimates for the LDG gradient}

Let $V^N(D^k)$ denote the local approximation space on the element $D^k$. On simplices, $V^N(D^k) = P^N(D^k)$, while on quadrilateral or hexahedral elements, $V^N(D^k) = Q^N(D^k)$. The LDG gradient of a scalar quantity $u$ can be defined locally (integrating \eqref{eq: LDG 1} by parts) as
\begin{equation}
\LRp{\bm{\theta}, \bm{w}}_{D^k} = \LRp{\nabla u, \bm{w}}_{D^k} + \LRa{\frac{1}{2}\jump{u}\LRp{1 - \beta}, \bm{w}\cdot \bm{n}}_{\partial D^k} \qquad \forall \bm{w} \in \LRs{V^{N}(D^k)}^d,
\label{eq:ldg_grad}
\end{equation}
where $\beta = {\rm sign}(\bm{v}_0 \cdot\bm{n})$ for some divergence-free $\bm{v}_0 \in \mathbb{R}^d$.

Let $a \lesssim b$ denote $a \leq C b$ where $C$ is independent of the mesh size (though it may depend on shape regularity and polynomial degree). Restricting for the moment to the case when $D^k$ is an affine simplex, we have the following lemma: 
\begin{lemma}
\label{lemma:ldg}
Let $D^k$ be a $d$-dimensional affine simplex and $\beta = 1$ on at least one face. Then,
\[
\nor{\bm{\theta}}_{L^2(D^k)} \gtrsim \nor{\nabla u}_{L^2(D^k)}.
\]
\end{lemma}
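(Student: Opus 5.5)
The plan is to test the defining relation \eqref{eq:ldg_grad} with a carefully chosen vector field $\bm{w}\in\LRs{P^N(D^k)}^d$ whose normal component vanishes on every face where the jump term is active. The relevant faces are those where $\beta=-1$, since the factor $(1-\beta)$ kills the jump on faces with $\beta=1$. For such a test function \eqref{eq:ldg_grad} reduces to $\LRp{\bm{\theta},\bm{w}}_{D^k}=\LRp{\nabla u,\bm{w}}_{D^k}$. If in addition $\bm{w}$ reproduces $\nabla u$ in a suitable moment sense and satisfies $\nor{\bm{w}}_{L^2(D^k)}\lesssim\nor{\nabla u}_{L^2(D^k)}$, then Cauchy--Schwarz gives
\[
\nor{\nabla u}_{L^2(D^k)}^2=\LRp{\bm{\theta},\bm{w}}_{D^k}\le \nor{\bm{\theta}}_{L^2(D^k)}\nor{\bm{w}}_{L^2(D^k)}\lesssim \nor{\bm{\theta}}_{L^2(D^k)}\nor{\nabla u}_{L^2(D^k)},
\]
which is the claim.

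To construct $\bm{w}$, I would use the projection from the analysis of the minimal dissipation LDG method of Cockburn and Dong \cite{cockburn2007analysis}. Fix one face $F^\ast$ of $D^k$ on which $\beta=1$; the hypothesis guarantees that such a face exists. For a vector field $\bm{q}$, define $\bm{\Pi}\bm{q}\in\LRs{P^N(D^k)}^d$ by the two conditions
\[
\LRp{\bm{\Pi}\bm{q}-\bm{q},\bm{r}}_{D^k}=0 \quad \forall\,\bm{r}\in\LRs{P^{N-1}(D^k)}^d,
\]
\[
\LRa{(\bm{\Pi}\bm{q}-\bm{q})\cdot\bm{n},\mu}_F=0 \quad \forall\,\mu\in P^N(F),\ \text{for every face } F\neq F^\ast.
\]
First I would verify that this projection is well defined on the reference simplex. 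The count of conditions matches the dimension: $d\dim P^{N-1}+d\dim P^N_{(d-1)}=d\dim P^N_{(d)}$. Unisolvence then follows by showing that a field with zero data vanishes. Its normal trace is zero on $d$ faces, so one can write it in terms of the barycentric coordinates of those faces; orthogonality to $\LRs{P^{N-1}}^d$ then forces it to be zero. This is the standard Cockburn--Dong argument, and I would cite it rather than redo it.

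Next, I would take $\bm{w}$ to be the projection defined by the same moment conditions, but with the face moments set to $0$ on all $F\neq F^\ast$ and the interior moments taken from $\bm{q}=\nabla u$. Since $\nabla u\in\LRs{P^{N-1}(D^k)}^d$, the interior condition with $\bm{r}=\nabla u$ gives $\LRp{\bm{w},\nabla u}_{D^k}=\nor{\nabla u}^2_{L^2(D^k)}$. Because $\bm{w}\cdot\bm{n}$ is a polynomial in $P^N(F)$ orthogonal to all of $P^N(F)$ on every $F\neq F^\ast$, it vanishes identically there, and in particular on every face with $\beta=-1$. On $F^\ast$ the jump term is already absent because $1-\beta=0$.

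The last step, and the one I expect to be the main technical obstacle, is the stability bound $\nor{\bm{w}}_{L^2(D^k)}\lesssim\nor{\nabla u}_{L^2(D^k)}$ with a constant independent of $h$. On the reference simplex this bound is immediate from finite-dimensionality and unisolvence: the map from $\nabla u$ to $\bm{w}$ is a linear map between finite-dimensional spaces. The difficulty is transferring it to $D^k$. I would map the construction back with the contravariant Piola transform, which preserves zero normal traces on faces and maps $\LRs{P^N}^d$ to itself for affine maps. The moment conditions transform up to Jacobian factors, and these are controlled by shape regularity. Alternatively, one can avoid the explicit Piola bookkeeping by arguing that the ratio $\nor{\bm{w}}/\nor{\nabla u}$ is invariant under dilation and translation, and depends continuously on the remaining affine parameters, which range over a compact set for shape-regular meshes. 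The resulting constant depends on $N$, $d$ and shape regularity only, as the notation $\lesssim$ permits.
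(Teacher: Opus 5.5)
Your proposal is correct and follows essentially the same route as the paper. Both test \eqref{eq:ldg_grad} with the Cockburn--Dong projection $\bm{\tau}^*$, whose interior moments match those of $\nabla u$ against $\LRs{P^{N-1}(D^k)}^d$ and whose normal moments vanish on every face except one face $f^*$ with $\beta=1$, and then combine $\LRp{\bm{\theta},\bm{\tau}^*}_{D^k}=\nor{\nabla u}_{L^2(D^k)}^2$ with the stability bound $\nor{\bm{\tau}^*}_{L^2(D^k)}\lesssim\nor{\nabla u}_{L^2(D^k)}$. The only difference is that you justify unisolvence and the $h$-independent stability constant yourself, via a reference-element/Piola scaling argument, where the paper simply cites Lemma~3.2 of \cite{cockburn2007analysis}.
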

\begin{proof}
To show this bound, we choose $\bm{w}$ in \eqref{eq:ldg_grad} to be some projection $\bm{\tau}^* \in P^N(D^k)$. 
We use the projection from Section 3.1 of \cite{cockburn2007analysis} 
\begin{align}
\LRp{\bm{\tau}^*, \bm{w}}_{D^k} &= \LRp{\nabla u, \bm{w}}, &\forall \bm{w} \in \LRs{P^{N-1}(D^k)}^d, \label{eq:tau_test}\\
\LRa{\bm{\tau}^* \cdot\bm{n}, {w}}_{f} &= \LRa{0, w}_{f},  &\forall w \in P^{N}(\partial D^k \setminus f^*) \quad i = 1, \ldots, d-1 \nonumber
\end{align}
where $f^*$ denotes a single face for which $\beta = 1$. Note that, if $\beta = {\rm sign}(\bm{v}_0\cdot \bm{n})$ for a locally constant (or more generally, a divergence-free $\bm{v}_0$), then for affine simplices, at least one such face where $\beta = 1$ must exist \cite{cockburn2007analysis}. Next, we note that $\nor{\bm{\theta}}_{L^2(D^k)}$ can be expressed as a supremum \cite{john2016stable}
\[
\nor{\bm{\theta}}_{L^2(D^k)} = \sup_{\bm{w} \in \LRs{P^N{D^k} / \{0\}}^d} \frac{\LRp{\bm{\theta}, \bm{w}}_{D^k}}{\nor{\bm{w}}_{L^2(D^k)}}.
\]
Then, testing with $\bm{\tau}^*$ given by \eqref{eq:tau_test}, it follows that
\[
\nor{\bm{\theta}}_{L^2(D^k)} = \sup_{\bm{w} \in \LRs{P^N{D^k} / \{0\}}^d} \frac{\LRp{\bm{\theta}, \bm{w}}_{D^k}}{\nor{\bm{w}}_{L^2(D^k)}} \gtrsim \frac{\LRp{\bm{\theta}, \bm{\tau}^*}}{\nor{\bm{\tau}^*}_{L^2(D^k)}} \gtrsim \nor{\nabla u}_{L^2(D^k)},
\]
where we have used in the final step that $\LRp{\bm{\theta},\bm{\tau}^*}_{D^k} = \nor{\nabla u}_{L^2(D^k)}^2$ from \eqref{eq:tau_test} and that $\nor{\bm{\tau}^*}_{L^2(D^k)} \lesssim \nor{\nabla u}_{L^2(D^k)}$ by Lemma 3.2 in \cite{cockburn2007analysis}.
\end{proof}

\begin{remark}
While Lemma~\ref{lemma:ldg} only holds for simplices, the generalization to Cartesian quadrilateral or hexahedral elements can be performed using a tensor product version of this projection with similar properties; see (3.4-3.5) of \cite{cockburn2001superconvergence}. 
Additionally, the proof is straightforward to extend to curved elements if $\beta = 1$ on at least one face.  
\end{remark}

We note that Lemma~\ref{lemma:ldg} implies that the LDG gradient is locally null-space consistent, in that the nullspace of the LDG gradient operator is spanned only by the constant. This distinguishes LDG from other stabilized discretizations \cite{arnold2002unified}, which focus on null-space consistency of the discretized Laplacian as opposed to the gradient. We also note that, while Cockburn and Dong showed that the more general LDG fluxes where ${\rm sign}(\beta) = {\rm sign}(\bm{v}_0 \cdot \bm{n})$ also result in a well-posed discretization of the Laplacian \cite{cockburn2007analysis}, they do not imply local null-space consistency of the gradient in general. 

\subsection{Upper bounds on the ECAV artificial viscosity coefficient}

Recall that the ECAV artificial viscosity coefficient is defined as
\[
\epsilon_k(\bm{u}_h) = \frac{-\min(0, \delta_k(\bm{u}_h))}{\int_{D^k} \bm{\theta}^T\bm{K}\bm{\theta}}
\]
where the volume entropy residual $\delta_k(\bm{u}_h)$ is 
\[
\delta_k(\bm{u}_h) = \sum_{i=1}^d \int_{D^k} -\bm{f}_i(\bm{u}_h) \pd{\Pi_N \bm{v}(\bm{u}_h)}{x_i}  + \int_{\partial D^k} \psi_i(\tilde{\bm{u}})n_i.
\]
Here, $\psi_i(\bm{u})$ is the entropy potential and $\tilde{\bm{u}} = \bm{u}(\Pi_N \bm{v}(\bm{u}_h))$ is the entropy projection (e.g., the evaluation of the conservative variables using the $L^2$ projection of the entropy variables). 

Using properties of the LDG gradient, we can bound the artificial viscosity from above. We first make the following assumption:
\begin{assumption}
\label{assumption:admissibility}
The solution $\bm{u}_h$ is defined such that the dissipation matrices $\bm{K}_{ij}$ are pointwise positive-definite 
\[
\sum_{i,j=1}^d \bm{\Theta}_i^T \bm{K}_{ij}\bm{\Theta}_j \geq \lambda_{\min} \sum_{i=1}^d \bm{\Theta}_i^T\bm{\Theta}_i. 
\]
\end{assumption}
For the Laplacian artificial viscosity in this work, this corresponds to the condition that $\pd{\bm{u}}{\bm{v}}$ is positive-definite. This holds if the entropy $S(\bm{u})$ is strongly convex, and is equivalent to the density and internal energy being bounded from below by some positive value. 


\begin{lemma} \label{lemma: epsilon bound}
Let Assumption~\ref{assumption:admissibility} hold, and assume exact integration of all integrals in the DG formulation \eqref{eq: viscous DG formulation} and that $\tilde{\bm{u}}, \bm{f}_m(\tilde{\bm{u}})$ are differentiable. Then, we have the following bound:
\[
\epsilon_k(\bm{u}_h) \lesssim C(\bm{u}_h) h
\]
where $C(\bm{u}_h)$ is given by
\[
C(\bm{u}_h) = \nor{\lambda_{\min}^{-1}}_{L^\infty} \sqrt{\sum_{i=1}^d \nor{\pd{\bm{f}_i}{\bm{v}}}^2}
\sqrt{\frac{\nor{\bm{\delta v}}_{L^2(D^k)}^2}{\nor{\Pi_N \bm{\delta v}}_{L^2(D^k)}^2}-1}.
\]
Here, $\bar{\bm{v}}$ is the integral mean of $\bm{v}(\bm{u}_h)$ over $D^k$ and $\bm{\delta v} = \bm{v}(\bm{u}_h) - \bar{\bm{v}}$.
\end{lemma}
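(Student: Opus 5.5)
The plan is to bound the numerator $-\min(0,\delta_k(\bm{u}_h))$ of the ECAV coefficient by $h\nor{\nabla \bm{v}_h}_{L^2(D^k)}^2$ times a solution-dependent constant, where $\bm{v}_h=\Pi_N\bm{v}(\bm{u}_h)$. The denominator $\int_{D^k}\bm{\theta}^T\bm{K}\bm{\theta}$ is then bounded from below by $\lambda_{\min}\nor{\nabla\bm{v}_h}_{L^2(D^k)}^2$, and taking the ratio gives the claim.

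For the denominator, Assumption~\ref{assumption:admissibility} gives $\int_{D^k}\bm{\theta}^T\bm{K}\bm{\theta}\geq \nor{\lambda_{\min}^{-1}}_{L^\infty}^{-1}\sum_i\nor{\bm{\Theta}_i}^2_{L^2(D^k)}$. Applying Lemma~\ref{lemma:ldg} componentwise to each entry of $\bm{v}_h$, using the LDG gradient \eqref{eq:ldg_grad}, gives
\[
\sum_i\nor{\bm{\Theta}_i}^2_{L^2(D^k)}\gtrsim\nor{\nabla \bm{v}_h}^2_{L^2(D^k)}.
\]

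For the numerator, the key observation is that $\bm{v}(\tilde{\bm{u}})=\bm{v}_h$ exactly, and $\tilde{\bm{u}}$ and $\bm{f}_m(\tilde{\bm{u}})$ are differentiable. With exact integration, the continuous identity \eqref{eq:cell_entropy_equality} then holds for $\tilde{\bm{u}}$, because \eqref{eq:dpsi_rule} together with the divergence theorem gives
\[
\sum_m\int_{D^k}-\pd{\bm{v}_h}{x_m}^T\bm{f}_m(\tilde{\bm{u}})+\int_{\partial D^k}\psi_m(\tilde{\bm{u}})n_m=0 .
\]
Subtracting this from $\delta_k$ eliminates the boundary terms and leaves
\[
\delta_k(\bm{u}_h)=-\sum_m\int_{D^k}\pd{\bm{v}_h}{x_m}^T\LRp{\bm{f}_m(\bm{u}_h)-\bm{f}_m(\tilde{\bm{u}})}.
\]
By Cauchy--Schwarz, $|\delta_k|\leq\nor{\nabla\bm{v}_h}_{L^2}\sqrt{\sum_m\nor{\bm{f}_m(\bm{u}_h)-\bm{f}_m(\tilde{\bm{u}})}^2_{L^2}}$. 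Viewing $\bm{f}_m$ as a function of the entropy variables, the mean value theorem along the segment between $\bm{v}_h$ and $\bm{v}(\bm{u}_h)$ gives
\[
\nor{\bm{f}_m(\bm{u}_h)-\bm{f}_m(\tilde{\bm{u}})}_{L^2}\leq\nor{\pd{\bm{f}_m}{\bm{v}}}\nor{\bm{v}(\bm{u}_h)-\Pi_N\bm{v}(\bm{u}_h)}_{L^2}.
\]
The norm of $\pd{\bm{f}_m}{\bm{v}}$ here is taken as a sup over the relevant states, which is how I would interpret the constant in the statement.

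To extract the factor $h$, I will write the projection error in terms of $\bm{\delta v}$. Constants are reproduced by $\Pi_N$, so $\bm{v}-\Pi_N\bm{v}=\bm{\delta v}-\Pi_N\bm{\delta v}$. By $L^2$-orthogonality (Pythagoras),
\[
\nor{\bm{v}-\Pi_N\bm{v}}^2=\nor{\bm{\delta v}}^2-\nor{\Pi_N\bm{\delta v}}^2=\LRp{\frac{\nor{\bm{\delta v}}^2}{\nor{\Pi_N\bm{\delta v}}^2}-1}\nor{\Pi_N\bm{\delta v}}^2 .
\]
Next, $\Pi_N\bm{\delta v}=\bm{v}_h-\bar{\bm{v}}$ has zero mean on $D^k$, since $\Pi_N$ preserves integral means. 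A Poincaré inequality on the shape-regular element therefore gives $\nor{\Pi_N\bm{\delta v}}_{L^2}\lesssim h\nor{\nabla\bm{v}_h}_{L^2}$.

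Combining these estimates gives $|\delta_k|\lesssim h\nor{\nabla\bm{v}_h}^2\sqrt{\sum_i\nor{\pd{\bm{f}_i}{\bm{v}}}^2}\sqrt{\nor{\bm{\delta v}}^2/\nor{\Pi_N\bm{\delta v}}^2-1}$. Dividing by the denominator bound, the factor $\nor{\nabla\bm{v}_h}^2$ cancels and $\epsilon_k\lesssim C(\bm{u}_h)h$.

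The main obstacle is the step that replaces $\bm{f}_m(\bm{u}_h)$ by $\bm{f}_m(\tilde{\bm{u}})$. It requires the exact-integration identity for the entropy projection and a mean value argument in entropy variables that is valid along the whole segment of states, which is where admissibility and differentiability are needed. A secondary point is the degenerate case $\nabla\bm{v}_h=0$. Then $\bm{v}_h$ is constant, so $\delta_k=0$ by the identity above and the regularized ratio \eqref{eq:regularized_ratio} returns zero.
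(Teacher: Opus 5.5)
Your proposal is correct and follows essentially the same route as the paper's proof. Both use Assumption~\ref{assumption:admissibility} and Lemma~\ref{lemma:ldg} for the denominator, and the exact cell entropy identity for $\tilde{\bm{u}}$ to rewrite $\delta_k(\bm{u}_h)$ as $-\sum_m\int_{D^k}(\partial_{x_m}\bm{v}_h)^T\LRp{\bm{f}_m(\bm{u}_h)-\bm{f}_m(\tilde{\bm{u}})}$. Both then apply Cauchy--Schwarz, the mean value theorem in entropy variables, a Poincar\'e inequality for the zero-mean $\Pi_N\bm{\delta v}$, and the Pythagorean identity for the projection error.

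Your sign in the rewritten identity is the correct one. The paper's intermediate display $\int_{\partial D^k}\psi_i(\tilde{\bm{u}})n_i=-\int_{D^k}\bm{f}_i(\tilde{\bm{u}})^T\partial_{x_i}\Pi_N\bm{v}(\bm{u}_h)$ has a stray minus sign, though it goes on to use the same difference $\bm{f}_i(\bm{u}_h)-\bm{f}_i(\tilde{\bm{u}})$.
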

\begin{proof}
By Assumption~\ref{assumption:admissibility}, we have that
\[
\epsilon_k(\bm{u}_h) = \frac{-\min(0, \delta_k(\bm{u}_h))}{\int_{D^k} \bm{\theta}^T\bm{K}\bm{\theta}} \leq \nor{\lambda_{\min}^{-1}}_{L^\infty}\frac{\LRb{\delta_k(\bm{u}_h)}}{\nor{\bm{\theta}}^2_{L^2(D^k)}}.
\]
Lemma~\ref{lemma:ldg} then yields
\begin{equation}
\frac{\LRb{\delta_k(\bm{u}_h)}}{\nor{\bm{\theta}}_{L^2(D^k)}^2} \leq \frac{\LRb{\delta_k(\bm{u}_h)}}{\nor{\nabla \Pi_N \bm{v}(\bm{u}_h)}^2_{L^2(D^k)}}.
\label{eq:lemma1}
\end{equation}
We can bound the numerator $\delta_k(\bm{u}_h)$ by noting that, since $\tilde{\bm{u}}$ is defined in terms of the $L^2$ projection of the entropy variables \cite{chan2025artificial, colombo2022entropy}, the solution is differentiable and from \eqref{eq:cell_entropy_equality} the boundary integral of the entropy variables is equivalent to the following volume integral
\[
\int_{\partial D^k} \psi_i(\tilde{\bm{u}})n_i = \int_{D^k} -\bm{f}_i(\tilde{\bm{u}})^T\pd{\Pi_N\bm{v}(\bm{u}_h)}{x_i}. 
\]
Thus, factoring out derivatives of the projected entropy variables and using Cauchy-Schwarz yields
\[
\LRb{\delta_k(\bm{u}_h)} \leq \sqrt{\sum_{i=1}^d\nor{\bm{f}_i(\bm{u}_h) -\bm{f}_i(\tilde{\bm{u}})}_{L^2(D^k)}^2} \nor{\nabla \Pi_N \bm{v}(\bm{u}_h)}_{L^2(D^k)}
\]
Substituting this into \eqref{eq:lemma1}, applying the mean value theorem, and factoring out the Jacobians $\pd{\bm{f}_i}{\bm{v}}$ then yields
\[
\epsilon_k(\bm{u}_h) \leq \sqrt{\sum_{i=1}^d \nor{\pd{\bm{f}_i}{\bm{v}}}^2}\frac{\nor{\bm{v}(\bm{u}_h) -\Pi_N\bm{v}(\bm{u}_h)}_{L^2(D^k)}}{\nor{\nabla \Pi_N \bm{v}(\bm{u}_h)}_{L^2(D^k)}}
\]
We  bound the denominator from below using a discrete Poincare-Friedrichs inequality \cite{brenner2003poincare, carstensen2018constants} 
\[
h\nor{\nabla \Pi_N \bm{v}(\bm{u}_h)}_{L^2(D^k)} \gtrsim  \nor{\Pi_N\bm{v}(\bm{u}_h) - \bar{\bm{v}}}_{L^2(D^k)}.
\]
Next, we note that the $L^2$ projection preserves constant moments, such that
\[
\nor{\bm{v}(\bm{u}_h) -\Pi_N\bm{v}(\bm{u}_h)}_{L^2(D^k)} = \nor{\LRp{\bm{v}(\bm{u}_h) - \bar{\bm{v}}} -\Pi_N\LRp{\bm{v}(\bm{u}_h)-\bar{\bm{v}}}}_{L^2(D^k)} = \nor{\LRp{\bm{\delta v} -\Pi_N\bm{\delta v}}}_{L^2(D^k)}.
\]
The proof is completed by noting that, by $L^2$ orthogonality of the projection error, $\nor{u}_{L^2}^2 = \nor{u - \Pi_Nu}_{L^2}^2 + \nor{\Pi_Nu}_{L^2}^2$ for any $u \in L^2$, and that
\[
\frac{\nor{u - \Pi_Nu}_{L^2}^2}{\nor{\Pi_Nu}_{L^2}^2} = \frac{\nor{u}_{L^2}^2 - \nor{\Pi_Nu}_{L^2}^2}{\nor{\Pi_Nu}_{L^2}^2} = \frac{\nor{u}_{L^2}^2}{\nor{\Pi_Nu}_{L^2}^2} - 1.
\]
\end{proof}
We note that, since the $L^2$ projection is contractive in the $L^2$ norm, the ratio of norms $\frac{\nor{\bm{\delta v}}_{L^2(D^k)}^2}{\nor{\Pi_N \bm{\delta v}}_{L^2(D^k)}^2} \geq 1$. However, numerical experiments suggest that this quantity is typically very close to $1$, even in the presence of shocks or near-zero density or pressure (which result in the entropy variable mapping being nearly singular and ill-conditioned). 

\begin{remark}[On quadrature accuracy]
We also note that, while Lemma~\ref{lemma: epsilon bound} assumes exact integration, for sufficiently accurate quadrature\footnote{For example, if the volume quadrature is exact for degree $2N+1$ polynomials and the surface quadrature exact for degree $2N+2$ polynomials \cite{chan2025artificial}.}, the quadrature error is of the same order of magnitude as the volume entropy residual $\delta_k(\bm{u}_h)$. Moreover, we emphasize that the ECAV coefficient \eqref{eq:ECAV_coeff} is designed so that the formulation \eqref{eq: viscous DG formulation} is guaranteed to satisfy an entropy inequality independent of the choice of quadrature, and that quadrature accuracy affects only the magnitude of $\epsilon_k(\bm{u}_h)$. 

Accounting for quadrature accuracy is also necessary when analyzing systems of conservation laws under the square entropy $S(\bm{u}) = \frac{1}{2}\bm{u}^T\bm{u}$, as $\bm{v}(\bm{u}) = \bm{u}$, as the entropy variables are identical to the conservative variables and the factor $\nor{\bm{v}(\bm{u}_h) - \Pi_N\bm{v}(\bm{u}_h)}_{D^k}$ in Lemma~\ref{lemma: epsilon bound} would vanish. This reflects the fact that DG discretizations of systems which are symmetrizable by a square entropy are automatically entropy stable under exact integration. Thus, while the bound in Lemma~\ref{lemma: epsilon bound} still holds, it is not practically useful for such settings. 
\end{remark}

Finally, we note that Lemma~\ref{lemma: epsilon bound} provides a pessimistic upper bound on the magnitude of $\epsilon_k(\bm{u}_h)$. However, for sufficiently regular solutions, we expect $\epsilon_k(\bm{u}_h)$ to be much smaller in magnitude. The beginning of the proof of Lemma~\ref{lemma: epsilon bound} implies that
\[
\epsilon_k(\bm{u}_h) = \frac{-\min(0, \delta_k(\bm{u}_h))}{\int_{D^k} \bm{\theta}^T\bm{K}\bm{\theta}} \lesssim \frac{\LRb{\delta_k(\bm{u}_h)}}{\nor{\nabla \Pi_N \bm{v}(\bm{u}_h)}^2_{D^k}}.  
\]
We now consider the behavior of $\epsilon_k(\bm{u}_h)$ as $\bm{u}_h \rightarrow \bm{u}$ under mesh refinement for some fixed sufficiently regular function $\bm{u}$. For regions where $\bm{u}$ is not constant, the denominator will converge to a positive value under mesh refinement. When $\bm{u}_h$ is exactly constant over $D^k$, $\epsilon_k(\bm{u}_h) = 0$ when calculated using the regularized ratio \eqref{eq:regularized_ratio} since both the denominator and numerator vanish. Thus, what remains is to consider the case when $\nabla\bm{u}$ vanishes only at specific points (or over a surface) within an element. For such cases, applying a discrete Poincare-Friedrichs inequality \cite{brenner2003poincare, carstensen2018constants} yields
\[
\epsilon_k(\bm{u}_h) \lesssim \frac{\LRb{\delta_k(\bm{u}_h)}}{\nor{\nabla \Pi_N \bm{v}(\bm{u}_h)}^2_{D^k}} \lesssim h^2 \frac{\LRb{\delta_k(\bm{u}_h)}}{\nor{\Pi_N (\bm{v}(\bm{u}_h) - \bar{\bm{v}})}^2_{D^k}}.
\]
It was shown in \cite{chan2025artificial} that the numerator is  $O(h^{2N+2+d})$ under exact integration. Assuming that $\nabla \bm{u}$ vanishes inside $D^k$ (but is not identically zero), $\bm{v}(\bm{u}_h) - \bar{\bm{v}} = O(h)$ and the denominator is $O(h^{2+d})$. Together, this suggests that $\max_k \epsilon_k(\bm{u}_h) \lesssim O(h^{2N+2})$ for sufficiently regular solutions, which is consistent with numerical results presented in \cite{chan2025artificial} for sufficiently accurate quadrature. 

\section{Numerical experiments}\label{sec:section_5}

In this section, we present numerical experiments which confirm theoretical properties of ECAV under an LDG discretization of \eqref{eq: monolithic viscous regularization}, compared to a BR-1 type regularization used in \cite{chan2025artificial}. All experiments are implemented in Julia using the \verb+StartUpDG.jl+ and \verb+Trixi.jl+ \cite{ranocha2022adaptive} libraries.  For the following 1D experiments, nodal collocation formulations use a $(N+1)$-point Gauss-Lobatto quadrature rule, while modal formulations use a $(N+2)$-point Gauss-Legendre quadrature rule. Unless otherwise stated, we use the HLLC interface flux \cite{batten1997choice}. Finally, the parameter $\beta$ for the LDG discretization in \Cref{eq: LDG 1} and \Cref{eq: LDG 3} is chosen as $\beta = \text{sgn}(n_x)$ and $\beta = \text{sgn}(2n_x + n_y)$ for 1D and 2D experiments, respectively. 

For time integration, we use the \verb+OrdinaryDiffEq.jl+ library \cite{rackauckas2017differentialequations}. Unless stated otherwise, we use the adaptive 4-stage 3rd order strong stability preserving Runge-Kutta method (SSPRK43) \cite{FCS2022, kraaijevanger1991contractivity, ranocha2022optimized} with absolute and relative tolerances set to $10^{-6}$ and $10^{-4}$, respectively. 

Finally, we will consider two different formulations which result from different choices of quadrature. Unless otherwise specified, we use a ``modal'' formulation with a $(N+2)$-point Gauss-Legendre quadrature in 1D, and quadrature rules which are exact for degree $2N$ polynomials on triangular elements.  

\subsection{2D Inviscid Burgers Equation}\label{Example: Burgers Equation}
We begin by comparing the magnitude of the entropy correction artificial viscosity coefficient using the BR-1 and LDG discretizations.  Consider the 2D inviscid Burgers Equation
\begin{equation}\label{eq: 2D Burgers}
    \pd{u}{t} + \pd{}{x}\left(\frac{1}{2}u^2\right) + \pd{}{y}\left(\frac{1}{2}u^2 \right) = 0.
\end{equation}
An initial boundary value problem is formulated on a domain of $(x, y) \in [-1, 1]^2
$, with periodic boundary conditions, a Gaussian initial condition
\begin{equation*}
    u_0(x,y) = \exp{\left(-25(x^2 + y^2)\right)}, 
\end{equation*}
and a final time of $t = 0.77$. \Cref{eq: 2D Burgers} admits the following entropy functional with the associated entropy potentials
\begin{equation*}
    S(u) = \frac{u^2}{2}, \quad \psi_1(u) = \psi_2(u) = \frac{u^3}{6}.
\end{equation*}
We use polynomial order $N = 3$, $K = 2 \times 30^2$ triangular elements, and the entropy-conserving interface flux \cite{gassner2013skew}
\begin{equation*}
    f^*_{\bm{n}} (u, u^+) = \frac{1}{6}\left((u^+)^2 + u^{-}u^{+} + (u^-)^2\right) (n_1 + n_2).
\end{equation*}
In \Cref{fig:Burgers Example} we observe that the rate of change of entropy is non-positive, implying that both entropy correction methods satisfy the entropy inequality. However the maximum artificial viscosity discretized with BR-1 is several orders of magnitude larger than the maximum artificial viscosity discretized with LDG. As a result, an adaptive SSPRK43 time integrator results in 474 adaptive time steps for LDG, compared to 20932 adaptive time steps for BR-1.  
\begin{figure}[tbp]
\centering
\begin{subfigure}[t]{.49\textwidth}
\centering
    \includegraphics[width =\textwidth]{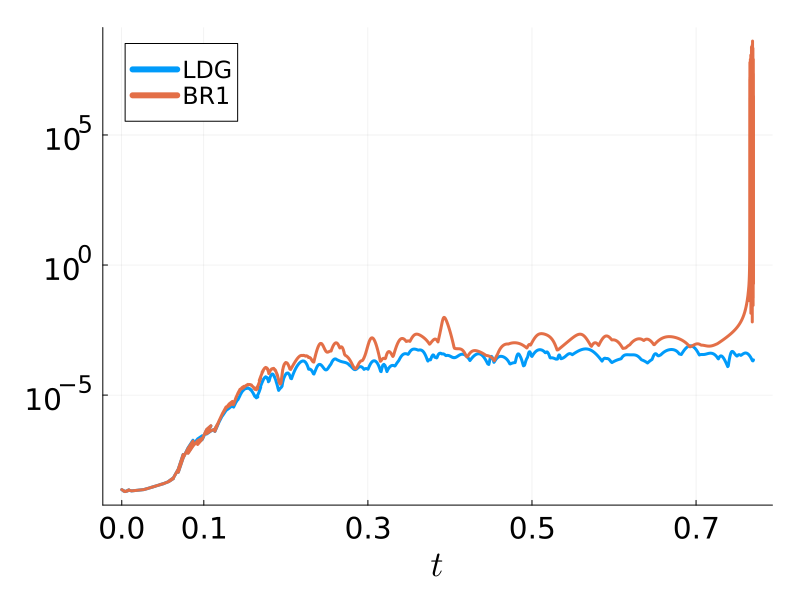}
    \caption{\centering $\displaystyle\max_k \epsilon_k$.}
    \label{fig:Burgers 2D epsilon}
\end{subfigure}
\hfill
\begin{subfigure}[t]{0.49\textwidth}
\centering
     \includegraphics[width = \textwidth]{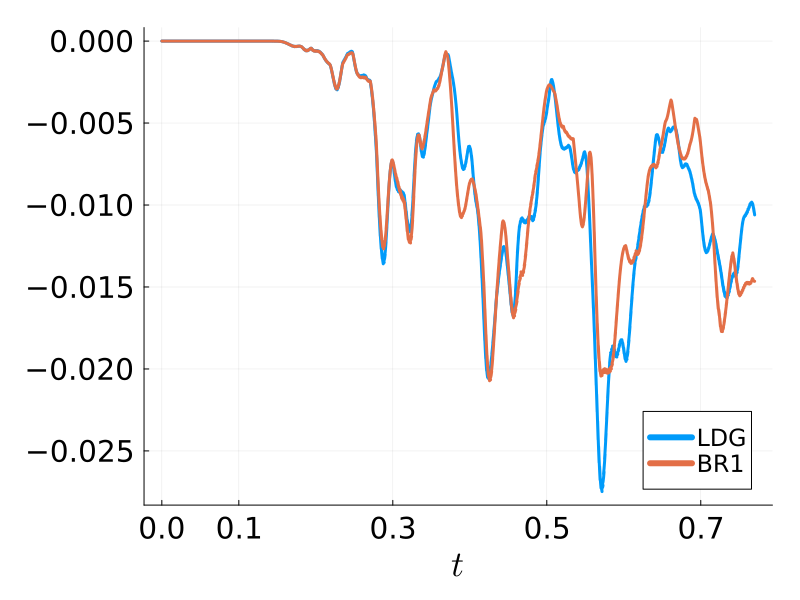}
     \caption{Change in entropy, $\frac{\mathrm{d}}{\mathrm{d}t}\int_{D}S$}
    \label{fig:Burgers 2D Change in Entropy}
\end{subfigure}
\caption{Example \ref{Example: Burgers Equation}, time evolution of the $\max_k{\epsilon_k}$ (semi-log plot) and change in entropy over time for BR-1 and LDG discretizations of the entropy correction artificial viscosity with $N = 2$ and $K = 2 \times 40^2$ elements.}
\label{fig:Burgers Example}
\end{figure}
We also analyzed the behavior of BR-1 and LDG under a nodal DG-like formulation constructed using multi-dimensional summation by parts (SBP) operators \cite{hicken2016multidimensional}. Under this nodal SBP formulation, LDG completes the simulation in 640 adaptive time steps, while the BR-1 discretization does not converge even after $100,000$ time steps. We note that, due to $\epsilon_k(\bm{u}_h)$ being large, BR-1 induces extremely small time steps on the order of $10^{-10}$ starting at roughly $t = 0.39897$. 
\begin{remark}\label{remark: regularized ratio}
     Under the BR-1 viscous discretization and a nodal DG formulation, computing the regularized ratio \eqref{eq:regularized_ratio} with $\delta = 10^{-14}$ resulted in violations of the semi-discrete entropy inequality due to finite precision effects. To address this issue, for both the nodal and modal DG formulations in this section, we use \verb+eps(b)+ for $\delta$, which returns the distance between \verb+b+ and the next representable floating-point value larger than \verb+b+.  
\end{remark}

We do not observe a significant difference between BR-1 and LDG for this problem when using an entropy stable interface flux. However, while the use of an entropy conservative interface flux in these experiments is mainly intended to confirm our theoretical results, the use of non-dissipative interface fluxes in realistic simulations has been explored for applications in turbulence modeling \cite{flad2017use}.

\subsection{Compressible Euler Equations}\label{sec:Compressible Euler Equations}
The remaining examples solve the 1D or 2D compressible Euler equations which are described below. Let $\bm{u}$ denote the vector of conserved variables, which in 2D are
\begin{equation*}
    \bm{u} = \left\{\rho,\rho u_1, \rho u_2, E \right\} \in \mathbb{R}^4.
\end{equation*}
Here, $\rho$ is the density, $u_1$ and $u_2$ are the velocity in the $x$ and $y$ direction, and $E$ is the specific total energy. The pressure $p$ is related to the conservative variables through the ideal gas constitutive relations
\begin{equation*}
    p = (\gamma-1)\rho e, \quad E = e + \frac{1}{2}(u_1^2 + u_2^2),
\end{equation*}
where $\gamma$ is the ratio of specific heats and $e$ is the internal energy density. All problems in this work utilize $\gamma = 1.4$ for calorically perfect air. In 2D the compressible Euler equations are 
\begin{equation*}
    \pd{\bm{u}}{t} + \pd{\bm{f}_1}{x} + \pd{\bm{f}_2}{y} = \bm{0},
\end{equation*}
where $\bm{f}_1$ and $\bm{f}_2$ denote the convective fluxes in the $x$ and $y$ direction,
\begin{equation*}
    \bm{f}_1 = 
    \begin{bmatrix}
        \rho u_1\\
        \rho u_1^2 + p\\
        \rho u_1 u_2 \\
        u_1(E+p)
    \end{bmatrix}, \quad 
    \bm{f}_2 = 
        \begin{bmatrix}
        \rho u_2\\
        \rho u_1 u_2\\
        \rho u_2^2 + p \\
        u_2(E+p)
    \end{bmatrix}.
\end{equation*}
If the velocity is set to zero in the $y$-direction, the 1D compressible Euler equations are recovered by disregarding the $y$-momentum equation.

The convex entropy function $\bm{v}(\bm{u}) = \pd{S}{\bm{u}}$ and corresponding entropy potentials $\psi_1(\bm{u})$ and $\psi_2(\bm{u})$
\begin{equation*}
    S(\bm{u}) = -\rho s, \qquad \psi_1(\bm{u}) = \rho u_1, \qquad \psi_2(\bm{u}) = \rho u_2,
\end{equation*}
where $s = \log{\left(\frac{p}{\rho^{\gamma}} \right)}$
denotes the physical entropy. The derivative of the entropy with respect to the conservative variables yield the entropy variables $\bm{v}(\bm{u}) = \pd{S}{\bm{u}} = \left\{v_1, v_2, v_3, v_4 \right\}$, where
\begin{equation*}
    v_1 = \frac{\rho e \left(\gamma + 1 - s\right)}{\rho e}, \quad v_2 = \frac{\rho u_1}{\rho e}, \quad v_{3} = \frac{\rho u_2}{\rho e}, \quad v_4 = -\frac{\rho}{\rho e}.
\end{equation*}
The inverse mapping is given by
\begin{equation*}
    \rho = -(\rho e)v_4, \quad \rho u_1 = (\rho e)v_2, \quad \rho u_2 = (\rho e) v_3, \quad E = (\rho e) \left(1 - \frac{v_2^2 + v_3^4}{2v_4} \right),
\end{equation*}
and $\rho e$ and $s$ in terms of the entropy variables are
\begin{equation*}
    \rho e = \left(\frac{(\gamma -1)}{(-v_4)} \right)^{1/(\gamma - 1)} e^{\frac{-s}{\gamma - 1}}, \quad s = \gamma - v_1 + \frac{v_2^2 + v_3^4}{2v_4}.
\end{equation*}
Finally, explicit expressions for the Jacobian matrix $\pd{\bm{u}}{\bm{v}}$ are given in terms of the sound speed $a$ and specific total enthalpy $H$ \cite{barth1999numerical}. 
\begin{equation*}
\begin{gathered}
    \pd{\bm{u}}{\bm{v}} = 
    \begin{bmatrix}
        \rho & \rho u_1       & \rho u_2     & E\\
             & \rho u_1^2 + p & \rho u_1 u_2 & u_1 (E + p)\\
             & & \rho u_2^2 + p & u_2(E + p) \\
             & & & \rho H^2 - a^2\frac{p}{\gamma -1}
    \end{bmatrix},\\
    a = \sqrt{\gamma \frac{p}{\rho}}, \quad H = \frac{a^2}{\gamma -1} + \frac{1}{2}(u_1^2 + u_2^2),
\end{gathered}
\end{equation*}
where the lower triangular entries of $\pd{\bm{u}}{\bm{v}}$ are determined by symmetry.

\subsection{Shu Isentropic Euler Vortex Problem}\label{Ex: Vortex Problem}
This problem appeared in \cite{Shu1998Vortex} to compare high-order weighted essentially non-oscillatory (WENO) finite-volume (FV) methods with a traditional second-order FV method.  Since then, many variations of the problem have been considered in the literature; see \cite{Spiegal2015VortexProblem} and the references therein. This problem has an exact solution and we will use it to examine the accuracy of the proposed method. To reduce the effect of temporal discretization errors in our convergence results, we use the 5th-order accurate Runge-Kutta method of \cite{TSITOURAS2011770} (\verb+Tsit5+ in \verb+OrdinaryDiffEq.jl+) instead of SSPRK43. The absolute tolerance is set to $10^{-10}$ and the relative tolerance is set to $10^{-8}$.

The initial conditions for this problem are an isentropic vortex added to mean flow quantities.  The mean flow quantities are $\rho = p = 1$ and $u_1 = u_2 = 1/\gamma$ and the isentropic vortex is added by perturbing $u_1$, $u_2$ and the temperature $T = p / \rho$, but not perturbing the entropy $S = p/\rho^{\gamma}$.  These perturbations are
\begin{equation*}
\begin{gathered}
    \delta u_1(x,y) = -y{\Omega(x,y)}, \qquad \delta u_2(x,y) = x{\Omega(x,y)}, \\
    \delta T(x,y) = -\frac{(\gamma - 1)}{2}\Omega(x,y), \qquad
    \Omega(x,y) = \alpha \exp{-\frac{1}{2}\left(x^2 + y^2\right)},
\end{gathered}
\end{equation*}
where $\alpha = 5 \exp{(1/2)} / (2 \pi \sqrt{\gamma})$ is the maximum strength of the perturbation.  The analytical solution is a translation of the initial condition given by
\begin{equation}
\begin{aligned}
    \rho(x,y,t) &= \tilde{\rho}\left(x - \frac{1}{\gamma}t, y - \frac{1}{\gamma} t\right), \quad \tilde{\rho}(x,y) = \left(1 + \delta T(x, y)\right)^{1/ (\gamma - 1)},\\
    u_1(x,y,t) &= \tilde{u}_1\left(x - \frac{1}{\gamma}t, y - \frac{1}{\gamma} t\right), \quad \tilde{u}_1(x,y) = \frac{1}{\gamma} + \delta u_1(x,y),\\
    u_2(x,y,t) &= \tilde{u}_2\left(x - \frac{1}{\gamma}t, y - \frac{1}{\gamma} t\right), \quad \tilde{u}_2(x,y) = \frac{1}{\gamma} + \delta u_1(x,y),\\
    p(x,y,t) &= \tilde{p}\left(x - \frac{1}{\gamma}t,y-\frac{1}{\gamma}t\right), \quad \tilde{p}(x,y) = \left(\tilde{\rho}(x,y)\right)^\gamma.
\end{aligned}
\end{equation}

In \Cref{table:Shu Isentropic Vortex}, we present the errors and rates of convergence for $N = 1,2,3$ and $4$ using a modal DG formulation. The convergence rates are higher than the best $L^2$ approximation convergence rates, but we anticipate that under further mesh refinement, these rates will decrease and approach $N+1$ 
We note that similar errors and rates of convergence are obtained using both a BR-1 discretization of the artificial viscosity, as well as using a standard DG method with no artificial viscosity. 

\begin{table}[tbp]
\centering

\begin{tabular}{|c|c|c|c|c|}
\hline
$K$ & $L^2$ (N=1) & Order & $L^2$ (N=2) & Order \\
\hline
$16$ & $3.134 \times 10^{ -2 }$ & --- & $7.770 \times 10^{ -3 }$ & --- \\
$32$ & $1.201 \times 10^{ -2 }$ & $1.3834$ & $5.854 \times 10^{ -4 }$ & $3.7304$ \\
$64$ & $2.290 \times 10^{ -3 }$ & $2.3914$ & $4.845 \times 10^{ -5 }$ & $3.5949$ \\
$128$ & $3.522 \times 10^{ -4 }$ & $2.7006$ & $5.261 \times 10^{ -6 }$ & $3.2031$ \\
\hline
$K$ & $L^2$ (N=3) & Order & $L^2$ (N=4) & Order \\
\hline
$16$ & $1.274 \times 10^{ -3 }$ & --- & $3.567 \times 10^{ -4 }$ & --- \\
$32$ & $8.356 \times 10^{ -5 }$ & $3.9308$ & $1.314 \times 10^{ -5 }$ & $4.7620$ \\
$64$ & $4.201 \times 10^{ -6 }$ & $4.3141$ & $3.542 \times 10^{ -7 }$ & $5.2135$ \\
$128$ & $2.176 \times 10^{ -7 }$ & $4.2706$ & $9.870 \times 10^{ -9 }$ & $5.1656$ \\
\hline
\end{tabular}

\caption{Example \ref{Ex: Vortex Problem}: A h-refinement convergence table.  $2 \times K^2$ is the number of uniform triangular elements. The relative $L_2$ error and approximate order of accuracy are also given for the corresponding values of $K$.}
\label{table:Shu Isentropic Vortex}
\end{table}

\subsection{Stationary Contact Wave and Contact Preservation} \label{ex: stationary contact wave}

In the following stationary contact wave example, we demonstrate that ECAV is contact-preserving. Note that for piecewise constant initial conditions, the entropy correction artificial viscosity vanishes. The volume integral in the entropy residual \eqref{eqn: semi-discrete volume entropy residual} vanishes because $\Pi_N \bm{v}_h$ is constant on each element. Similarly, since $\psi_m(\tilde{\bm{u}})$ is constant over each element, the boundary integral in the entropy residual reduces to the integral over $\partial D^k$ of the outward normal component of a constant vector. Thus, if a baseline DG formulation is contact-preserving (e.g., if a contact preserving interface flux $\bm{f}^*_n$ is used) then \eqref{eq: viscous DG formulation} is contact preserving. 

To verify this, we consider the piecewise constant initial conditions
\begin{equation}\label{eq: stationary contact initial conidions constant.}
(\rho, u, p) = \begin{cases}
    (1.5, 0,1) & |x| < 0.3\\
    (1,0,1) & |x| \geq 0.3,
\end{cases}
\end{equation}
and the piecewise smooth initial condition
\begin{equation}\label{eq: stationary contact initial conidions not constant.}
(\rho, u, p) = \begin{cases}
    (1 + 0.5(\sin{(2\pi x)}+|x|), 0,1) & |x| < 0.3\\
    (1 + .5\sin{(2\pi x)},0,1) & |x| \geq 0.3,
\end{cases}
\end{equation}
with a domain $[-1.0,1.0]$, periodic boundary conditions, and a final time of $t = 4.0$. Since the piecewise constant initial conditions given by \eqref{eq: stationary contact initial conidions constant.} are exactly representable by a DG basis, we expect to observe exact contact preservation in this case since the interface flux is the contact-preserving HLLC flux \cite{batten1997choice}.

In \Cref{fig:Stationary Contact Wave Example}, we validate the contact preserving property of the proposed method by tracking the time evolution of the $L^2$ error is plotted for $N = 2,4,6,8$, a fixed time step of $\Delta t = 5 \times 10^{-4}$, $K = 80$ elements, for both initial conditions \eqref{eq: stationary contact initial conidions constant.} and \eqref{eq: stationary contact initial conidions not constant.}. With piecewise constant initial conditions, the stationary contact is preserved up to machine percision.  For the piecewise smooth solution, the stationary contact is preserved over time up to high-order accuracy. 
\begin{figure}[tbp]
\centering
\begin{subfigure}[t]{.49\textwidth}
\centering
    \includegraphics[width =\textwidth]{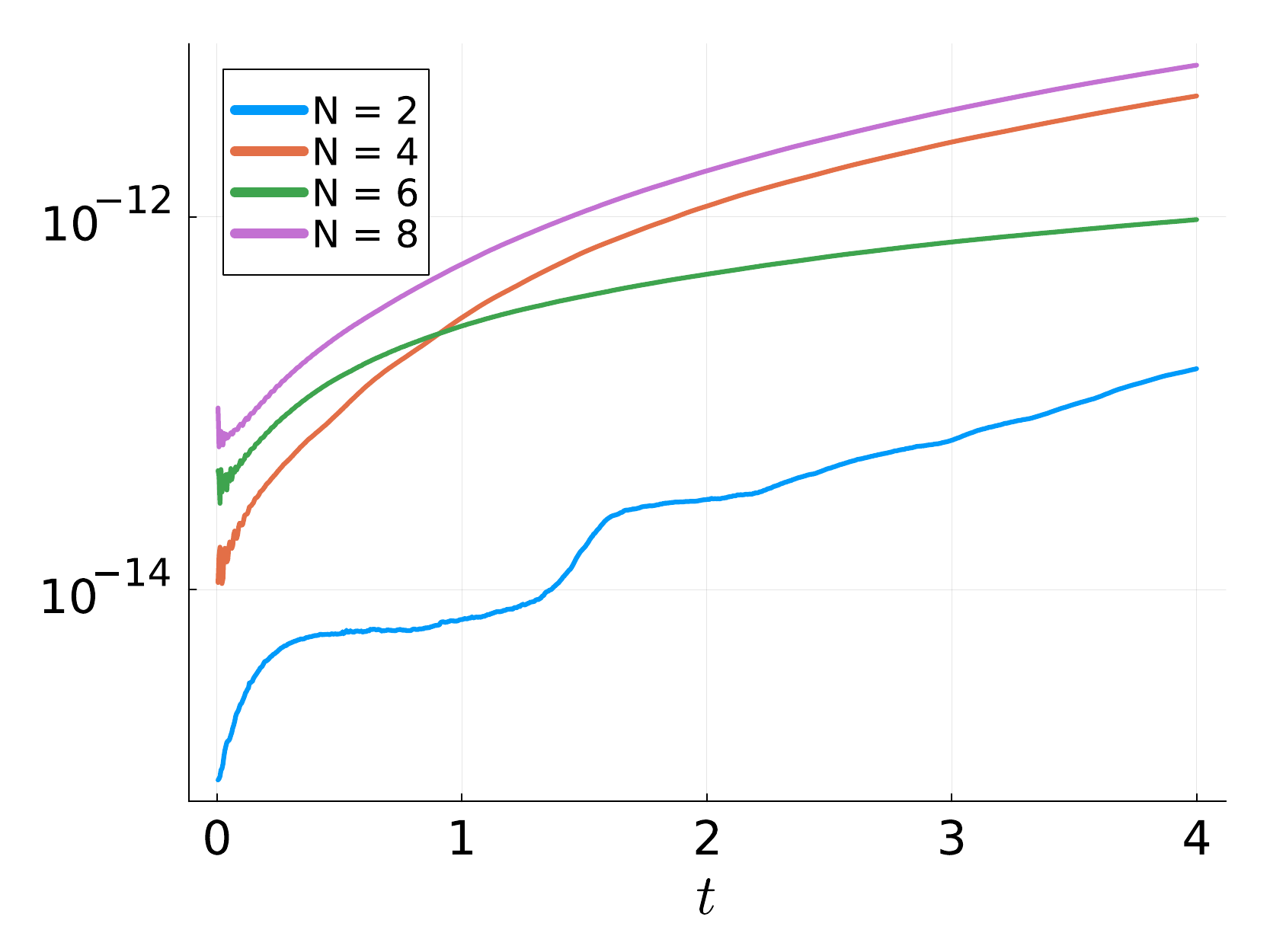}
    \caption{\centering Piecewise constant stationary solution \eqref{eq: stationary contact initial conidions constant.}}
    \label{fig: Stationary contact constant IC}
\end{subfigure}
\hfill
\begin{subfigure}[t]{0.49\textwidth}
\centering
     \includegraphics[width = \textwidth]{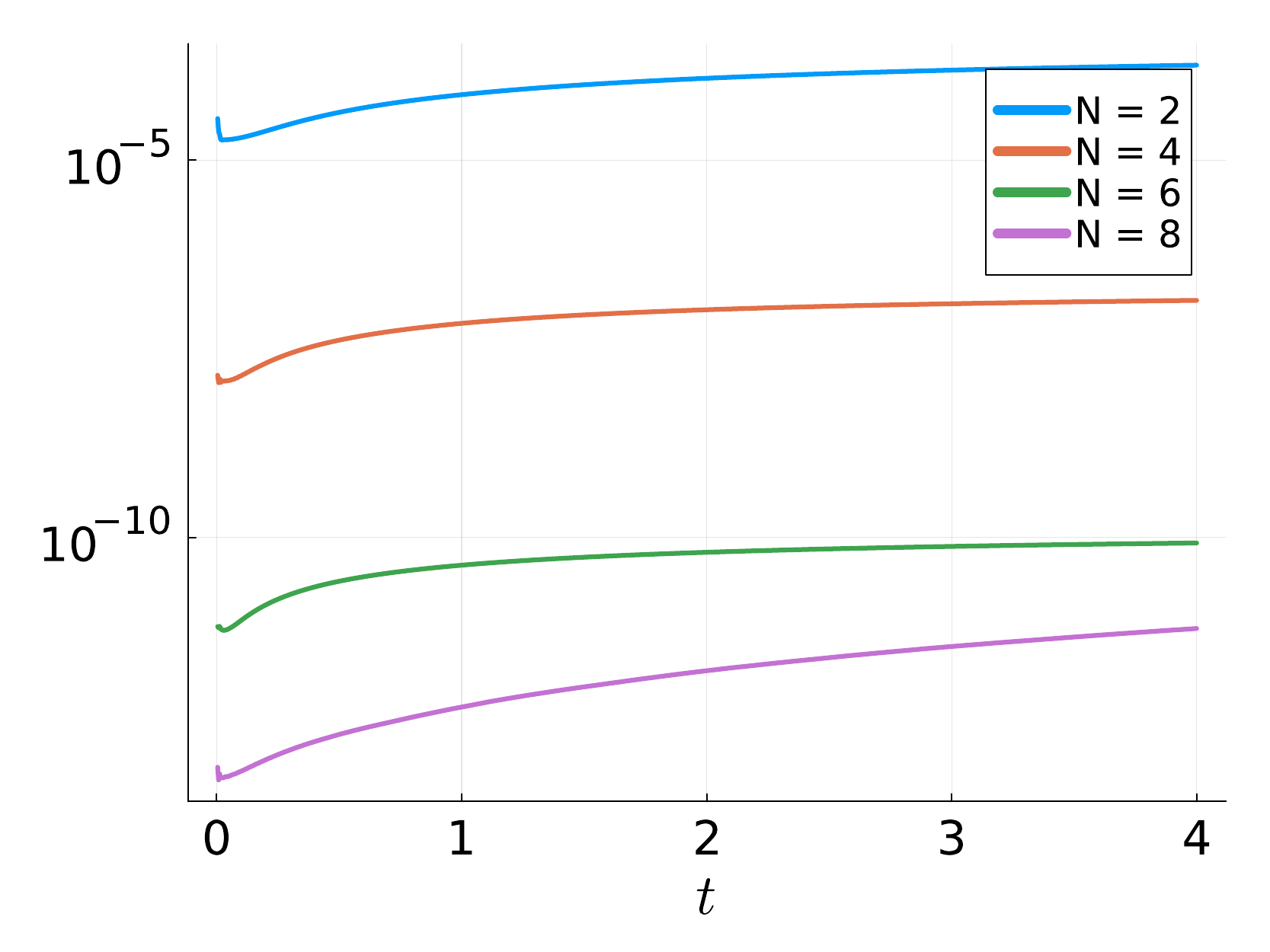}
     \caption{Piecewise smooth stationary solution \eqref{eq: stationary contact initial conidions not constant.}}
    \label{fig:Stationary contact noncostant IC}
\end{subfigure}
\caption{Example \ref{ex: stationary contact wave}, semi-log plots of the $L^2$ error evolution for two different stationary contact solutions. }
\label{fig:Stationary Contact Wave Example}
\end{figure}

\subsection{Shock-vortex interaction} \label{ex:Shock Vortex Interaction}

The next problem is the 2D shock-vortex interaction problem from \cite{Shu1998Vortex}. We will examine how the entropy correction LDG artificial viscosity method resolves the interaction between the shock and the vortex.  The domain is $[0,2] \times [0,1]$ and a stationary Mach $M_s = 1.1$ shock is positioned at $x = 0.5$ and normal to the x-axis.  On the left of the shock the initial condition is 
$
(\rho_L, u_L, v_L, p_L) = (1, \sqrt{\gamma},0,1)
$, and the initial conditions on the right side of the shock are computed using the Rankine-Hugoniot conditions 
$$
\frac{\rho_L}{\rho_R} = \frac{u_L}{u_R} = \frac{2+(\gamma - 1) M_s^2}{(\gamma + 1)M_s^2}, \quad \frac{p_L}{p_R} = 1 + \frac{2\gamma}{\gamma + 1} (M_s^2 - 1), \quad v_R = 0.
$$
An isentropic vortex is centered at $(x_c,y_c) = (0.25,0.5)$ is added as a perturbation of $(u_1,u_2)$, and temperature $T = p/\rho$
\begin{gather*}
\delta u_1 = v_{\theta}\sin{(\theta)}, \qquad \delta u_2 = -v_{\theta}\cos{(\theta)}, \\ 
 v_{\theta} = \epsilon \tau \exp{(\alpha(1-\tau^2))}, \qquad \delta T = -\frac{(\gamma - 1)\epsilon^2\exp{(2\alpha(1-\tau^2))}}{4\alpha \gamma}
\end{gather*}
where $r = \sqrt{(x-x_c)^2 + (y-y_c)^2}$, $\tau = r/r_c$, $\theta = \tan^{-1}(\frac{y-y_c}{x-x_c})$, $\epsilon = 0.3$, $\alpha = 0.204$, and $r_c = 0.05$.  Finally, entropy $S = \ln{(p/\rho^{\gamma})}$ is not perturbed and our initial conditions are 
$$
(\rho,u_1,u_2,p) = \begin{cases} \left(\rho_L\left(\frac{T_L + \delta T}{T_L}\right)^{\frac{1}{\gamma-1}}, \; u_L + \delta u_1, \; v_L + \delta u_2, \; p_L\left(\frac{T_L + \delta T}{T_L}\right)^{\frac{1}{\gamma-1}}\right) & x < 0.5\\
\left(\rho_R\left(\frac{T_L + \delta T}{T_L}\right)^{\frac{1}{\gamma-1}}, \; u_R + \delta u_1, \; v_R + \delta u_2, \; p_R\left(\frac{T_L + \delta T}{T_L}\right)^{\frac{1}{\gamma-1}}\right) & x > 0.5
\end{cases}
$$

We will enforce the same boundary conditions as \cite{chan2019efficient}, periodic boundary conditions on the left and right boundaries, and wall boundary conditions on the top and bottom boundaries.  Using a $128 \times 64$ element mesh and $N = 2$, in \Cref{fig:Shock Vortex Interaction} we plot the density profile and the numerical Schlieren data at times $t = 0$, $t = 0.2$ and $t = 0.7$.  The numerical Schlieren plot visualizes gradients of the density field by plotting the quantity:
\begin{equation}
    \rho^{schl} = \exp{\left(-10 \frac{g - g_{\min}}{g_{\max} - g_{\min}}\right)}, \quad g = \left\|\nabla \rho \right\|, \quad g_{\min} = \min_{x \in D}{g(x)}, \quad g_{\max} = \max_{x \in D}{g(x)}.
\end{equation}
In the numerical Schlieren plots, we observe that while oscillations are present, they remain localized around the vicinity of the shock. 

\begin{figure}[tbp]
\centering
\begin{subfigure}[t]{.49\textwidth}
\centering
    \includegraphics[width =\textwidth]{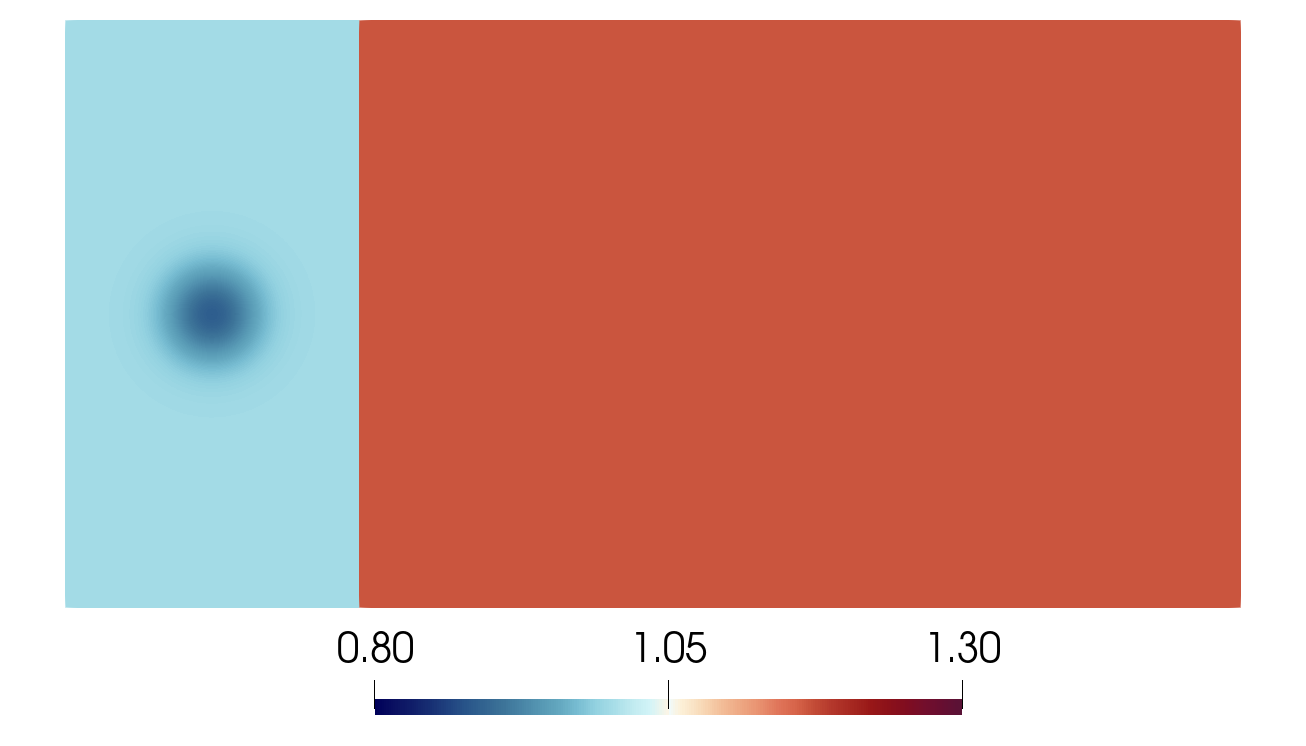}
    \caption{\centering Density at $t = 0$.}
    \label{fig: Schock Vortex interaction Density t =0}
\end{subfigure}
\hfill
\begin{subfigure}[t]{0.49\textwidth}
\centering
     \includegraphics[width = \textwidth]{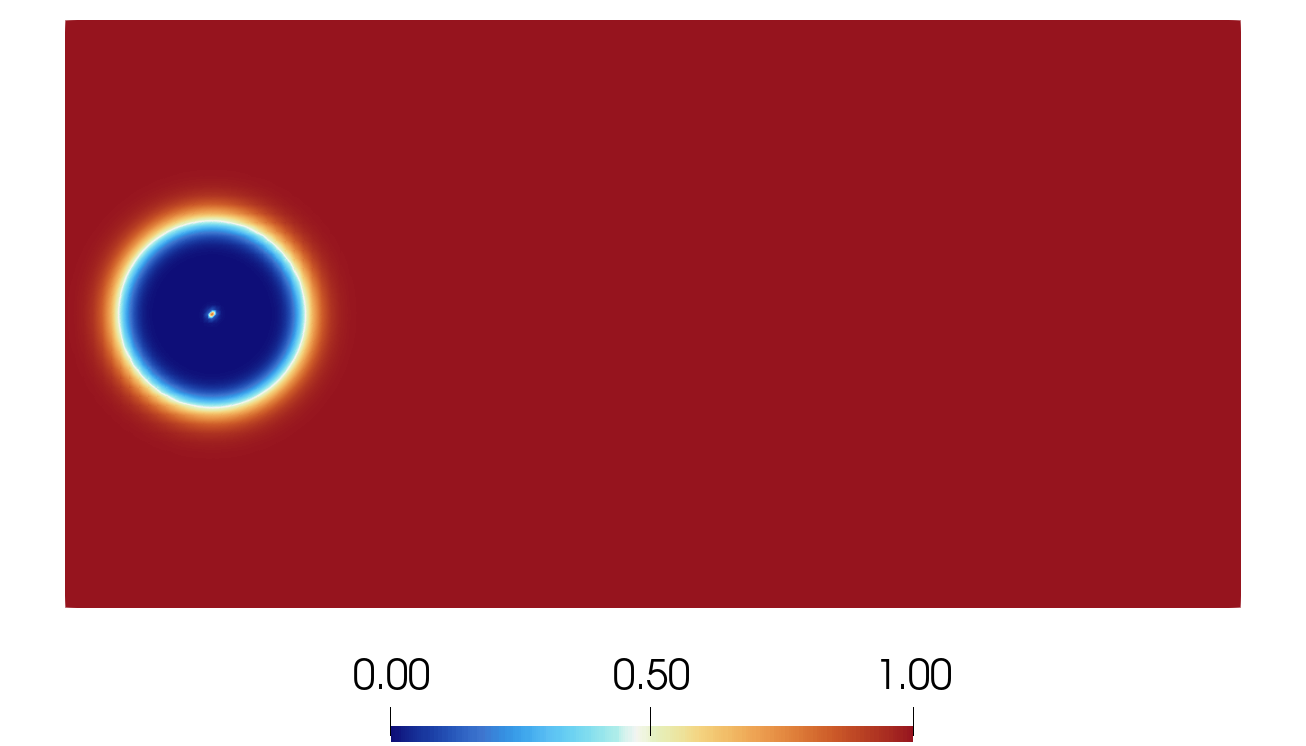}
     \caption{Schlieren plot at $t = 0$.}
    \label{fig: Schock Vortex interaction Schlieren t = 0}
\end{subfigure}

\begin{subfigure}[t]{.49\textwidth}
\centering
    \includegraphics[width =\textwidth]{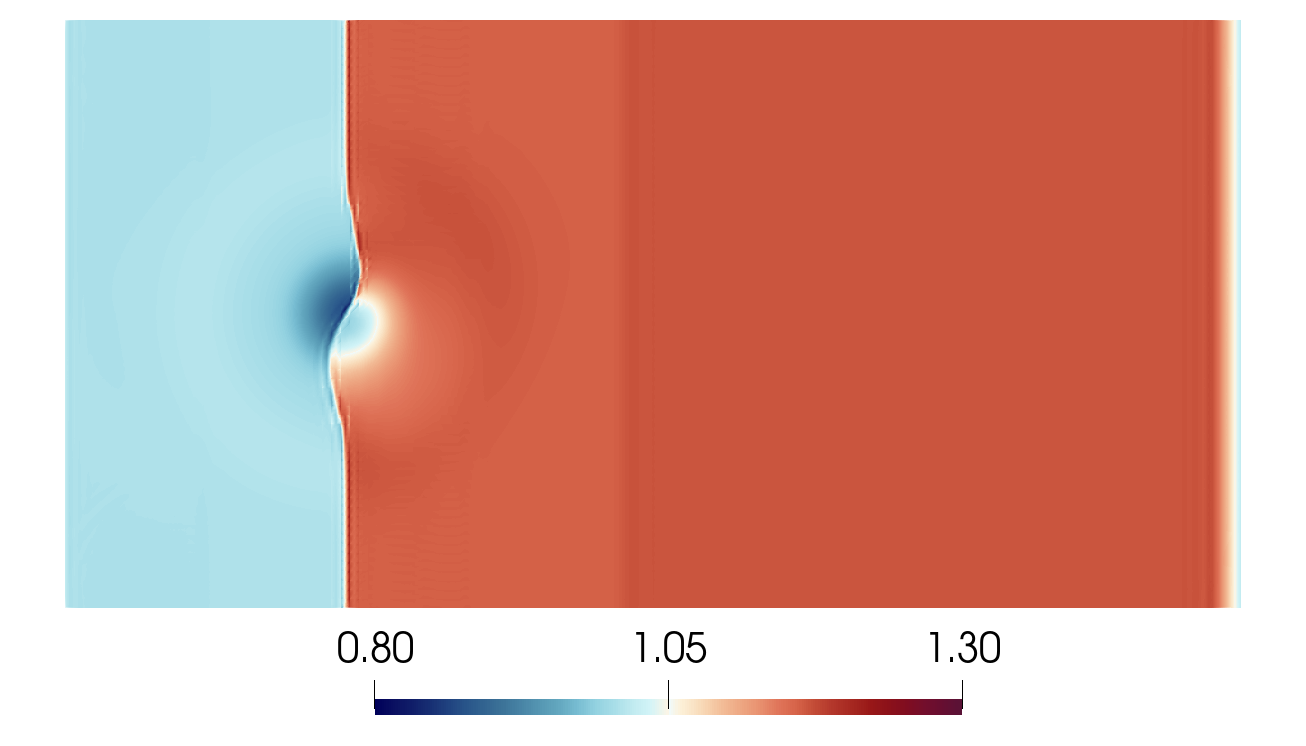}
    \caption{\centering Density at $t = 0.2$.}
    \label{fig: Schock Vortex interaction Density t =0.2}
\end{subfigure}
\hfill
\begin{subfigure}[t]{0.49\textwidth}
\centering
     \includegraphics[width = \textwidth]{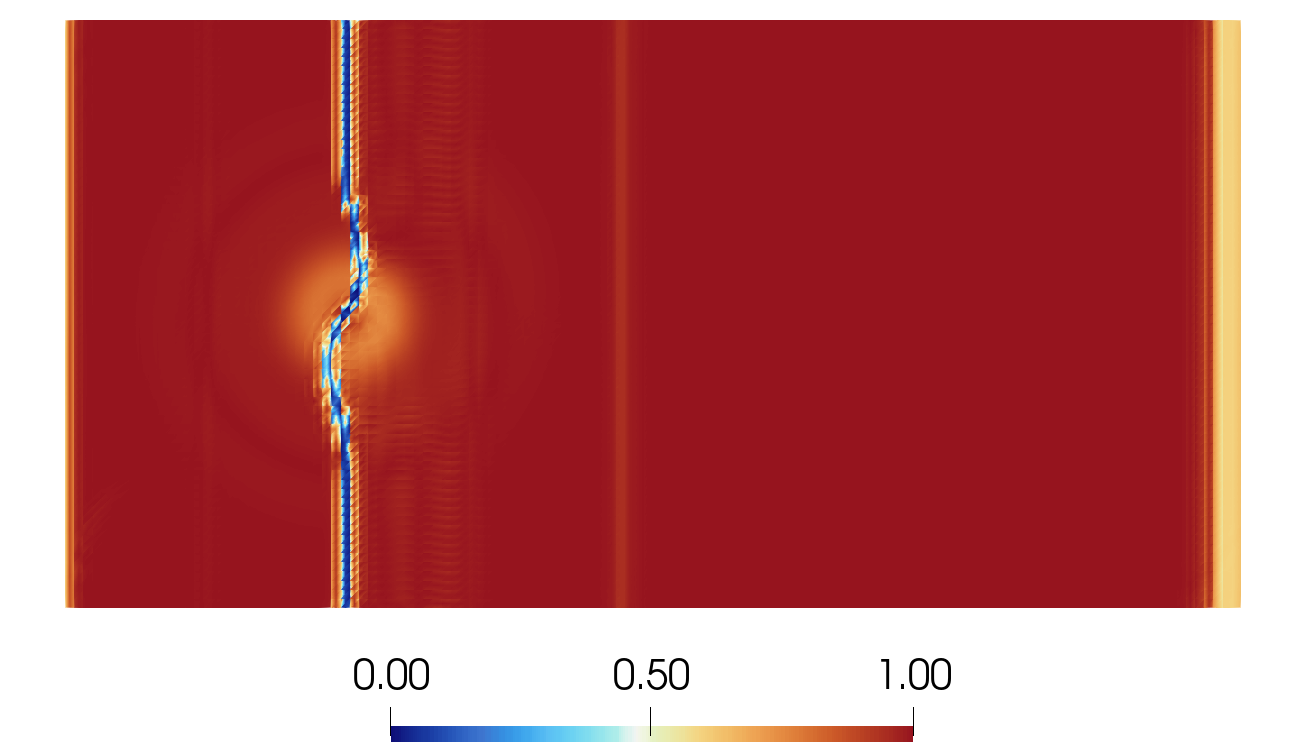}
     \caption{Schlieren plot at $t = 0.2$.}
    \label{fig: Schock Vortex interaction Schlieren t = 0.2}
\end{subfigure}

\begin{subfigure}[t]{.49\textwidth}
\centering
    \includegraphics[width =\textwidth]{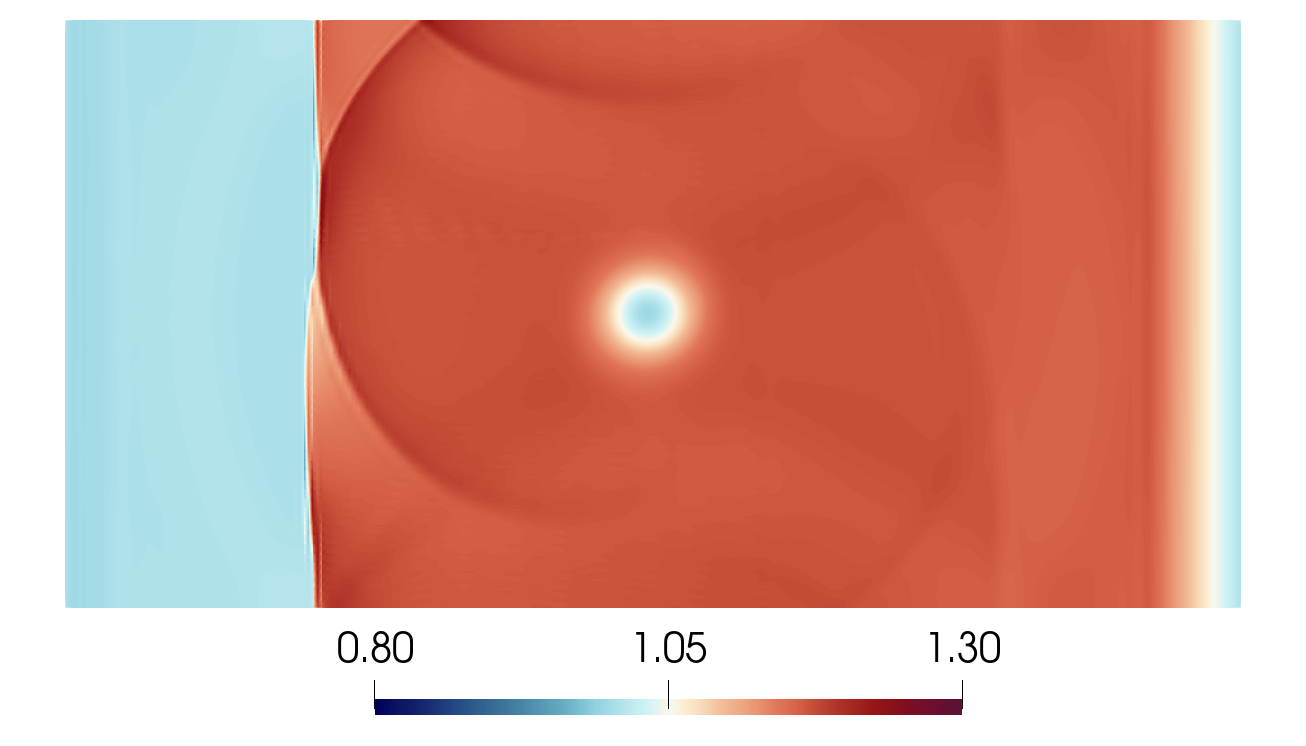}
    \caption{\centering Density at $t = 0.7$.}
    \label{fig: Schock Vortex interaction Density t =0.7}
\end{subfigure}
\hfill
\begin{subfigure}[t]{0.49\textwidth}
\centering
     \includegraphics[width = \textwidth]{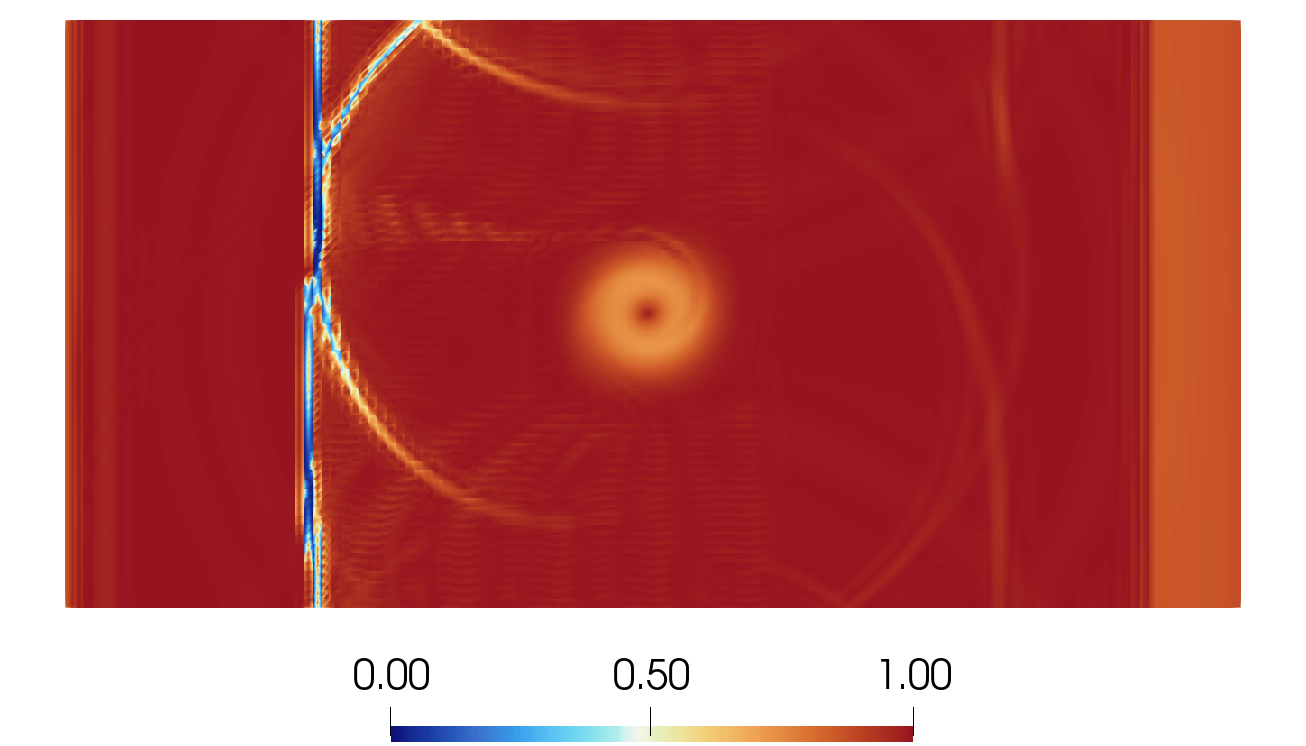}
     \caption{Schlieren plot at $t = 0.7$.}
    \label{fig: Schock Vortex interaction Schlieren t = 0.7}
\end{subfigure}
\caption{Example \ref{ex: Shu Osher}, density profiles and numerical Schlieren plots.}
\label{fig:Shock Vortex Interaction}
\end{figure}

\subsection{1D Density Wave}\label{ex: 1D Density Wave}

In the following example we compare the proposed ECAV method to a high order shock capturing method \cite{persson2006sub}. The shock capturing method utilizes the smoothness indicator on element $D^k$ introduced in \cite{hennemann2021provably} 
\begin{equation}\label{eq:smoothness indicator}
    S_k = \max{\left(\tilde{S}^N_k,\tilde{S}^{N-1}_k\right)}, 
\end{equation}
where $\tilde{S}^N_k$ is given by
\begin{equation}
\tilde{S}^N_k = \frac{\LRp{u_h - \hat{u}_h,u_h-\hat{u}_h}_{D^k}}{\LRp{u_h,u_h}_{D^k}}, \quad u_h = \sum_{j=1}^N u_j \psi_j, \quad \hat{u}_h = \sum_{j=1}^{N-1} u_j \psi_j,
\end{equation}
and $u_h$ and $\hat{u}_h$ are the indicator variables expressed in terms of the orthogonal basis $\left\{\psi_j \right\}_{j=1}^N$. In \cite{hennemann2021provably} the indicator variables $u$ are chosen as the product of the density and pressure variables.

For continuous solutions in one dimension, the $j^{\text{th}}$ Fourier coefficient scales as $\sim 1/j^2$; therefore we expect that $S_k \sim 1/N^4$.  This behavior is also expected for a general modal polynomial basis \cite{klockner2011viscous}.  Intuitively, artificial viscosity should be added if $S_k > 1/N^4$, and is thus defined based on the following formula:
\begin{equation}\label{eq:ramp}
    \epsilon_k(s_k) = 
    \begin{cases}
        0 & \text{if} \quad  s_k < s_0 - k\\
        \frac{\epsilon_0}{2} \LRp{1 + \sin{\frac{\pi(s_k - s_0)}{2 \kappa}}} & \text{if} \quad s_0 - \kappa \leq s_k \leq s_0 + \kappa\\
        \epsilon_0 & \text{if} \quad s_k > s_0 + \kappa,
    \end{cases}
\end{equation}
where $s_k = \log_{10}S_k$ and the parameters $\epsilon_0 \sim h/N$, $s_0 \sim \log_{10}(1/N^4)$, and $\kappa$ are chosen empirically so that a sharp but smooth shock profile is obtained. In the following numerical example, we use $s_0$ and $\kappa$ that are provided in \cite{PGSH2020}, where $s_0 + \kappa = -4\log_{10}{N}$ and $s_0 - \kappa = -11\log_{10}{N}$. We choose $\epsilon_0 = h/(2N)$; however, other values such as $\epsilon_0 = 3h/(2N)$ and $\epsilon_0 = 9h/(2N)$ resulted in similar behavior. 

We consider the 1D density wave problem with initial conditions:
\begin{equation}\label{eq:1D density wave initial conditions}
    \rho = 1.0 + 0.5 \exp{\left(- 10 \sin{\left(\pi x \right)^2} \right)}, \quad u = 0.1, \quad p = 10.0,
\end{equation}
with periodic boundary conditions. The solution to the density wave problem is approximated from $t = 0$ to a final time of $t = 25$ using the entropy correction artificial viscosity (ECAV) method and the shock capturing (SC) method.  For both methods, we use degree $N = 5$ polynomials. 

\Cref{fig: 1D density wave epsilon} shows the time evolution of the maximum artificial viscosity coefficient and \Cref{fig: 1D density wave L2 error} shows the time evolution of the $L^2$ error for the shock capturing and entropy correction artificial viscosity methods using $K = 8$ and $K = 16$ elements. In \Cref{fig: 1D density wave profile}, the density profile at the final time for the ECAV and shock capturing method is compared to the exact solution at the final time. At these resolutions, we observe that the entropy correction artificial viscosity method generally results in a smaller maximum artificial viscosity and a smaller $L^2$ error than the shock capturing method. 

In \Cref{table:1D density errors}, we additionally compare $L^2$ errors of the ECAV and the DG method with no added viscosity at the final time $t = 25$. We observe that the error of the ECAV solution is larger than the $L^2$ norm of the difference between the ECAV solution and standard DG solution. The similarity of the two solutions is expected in this situation where we have a smooth solution and the ECAV method adds a minimal amount of artificial viscosity.

\begin{figure}[tbp]
\centering
\begin{subfigure}[t]{.49\textwidth}
\centering
    \includegraphics[width =\textwidth]{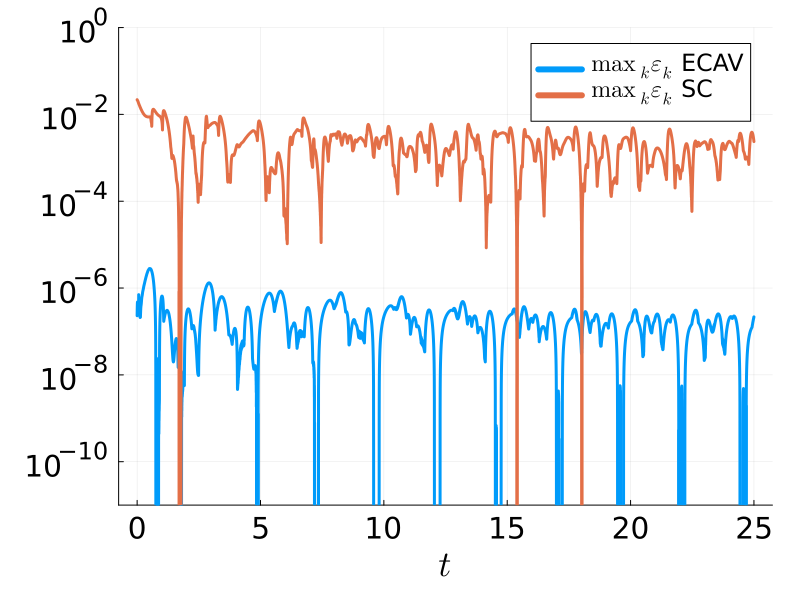}
    \caption{\centering $K = 8$}
    \label{fig: 1D density wave epsilon k = 8} 
\end{subfigure}
\hfill
\begin{subfigure}[t]{0.49\textwidth}
\centering
     \includegraphics[width = \textwidth]{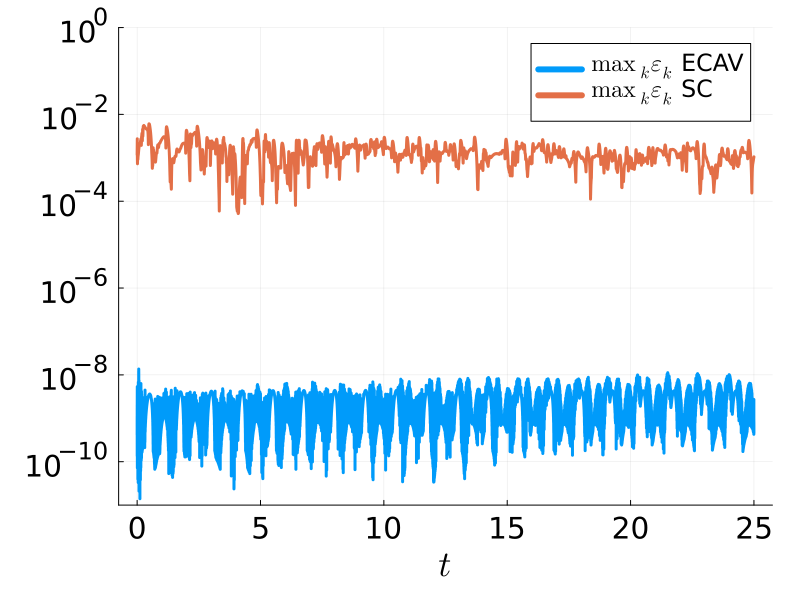}
     \caption{\centering $\max_k{\epsilon_k}$, $K = 16$}
    \label{fig: 1D density wave epsilon k = 16}
\end{subfigure}
\caption{Example \ref{ex: 1D Density Wave}, time evolution of the $\max_k{\epsilon_k}$ (semi-log plot) using shock capturing (labeled ``SC") and entropy correction artificial viscosity (labeled ``ECAV") for degree $N=5$ and both $K=8$ and $K = 16$ elements. }
\label{fig: 1D density wave epsilon}
\end{figure}


\begin{figure}[tbp]
\centering
\begin{subfigure}[t]{.49\textwidth}
\centering
    \includegraphics[width =\textwidth]{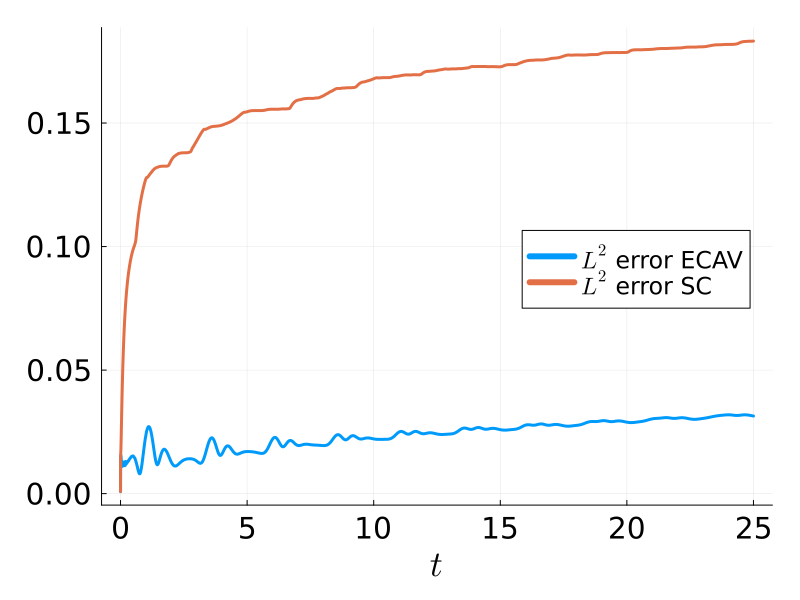}
    \caption{\centering $K = 8$}
    \label{fig: 1D density wave L2 error k = 8} 
\end{subfigure}
\hfill
\begin{subfigure}[t]{0.49\textwidth}
\centering
     \includegraphics[width = \textwidth]{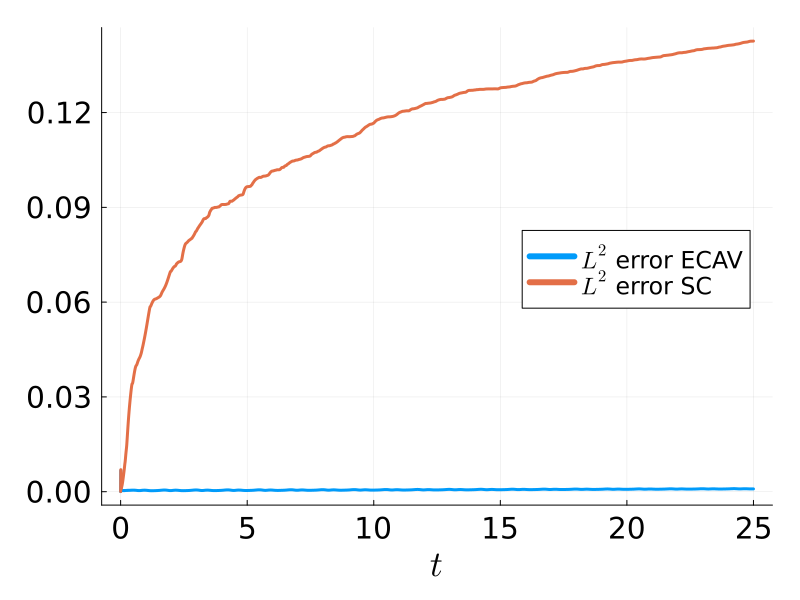}
     \caption{\centering $\max_k{\epsilon_k}$, $K = 16$}
    \label{fig: 1D density wave L2 error k = 16}
\end{subfigure}
\caption{Example \ref{ex: 1D Density Wave}, time evolution of the $L^2$ error using shock capturing (labeled ``SC") and entropy correction artificial viscosity (labeled ``ECAV") for degree $N=5$ and both $K=8$ and $K = 16$ elements. }
\label{fig: 1D density wave L2 error}
\end{figure}

\begin{figure}[tbp]
\centering
\begin{subfigure}[t]{.49\textwidth}
\centering
    \includegraphics[width =\textwidth]{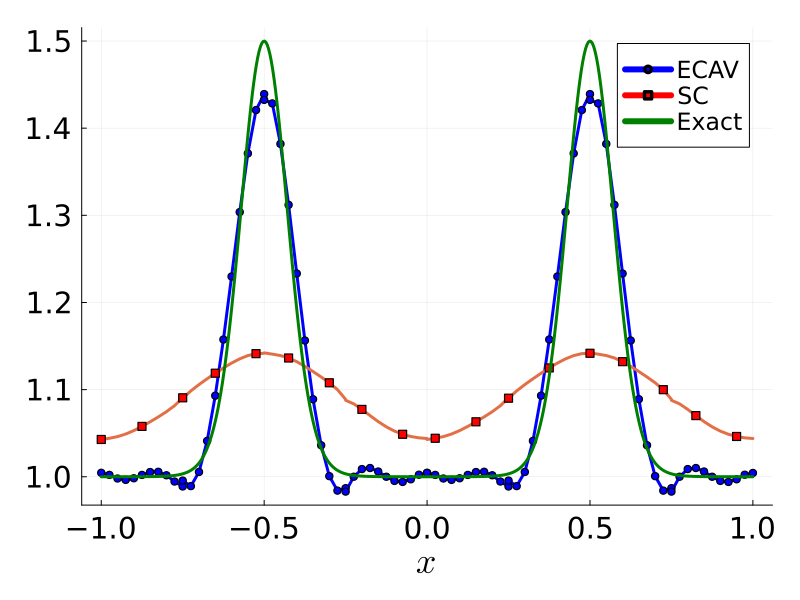}
    \caption{\centering $K = 8$}
    \label{fig: 1D density wave density profile k = 8.}
\end{subfigure}
\hfill
\begin{subfigure}[t]{0.49\textwidth}
\centering
     \includegraphics[width = \textwidth]{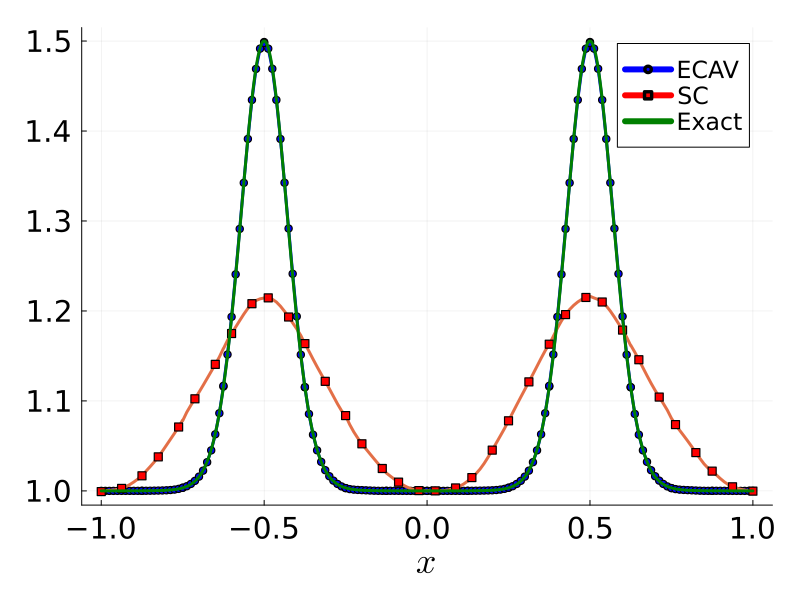}
     \caption{\centering $K = 16$}
    \label{fig: 1D density wave density profile k = 16}
\end{subfigure}
\caption{Example \ref{ex: 1D Density Wave}, Density profile at the final time $t = 25.0$ for the exact solution (labeled ``Exact"), the shock capturing (labeled ``SC"), and the entropy correction artificial Viscosity (labeld ``ECAV") using $K = 8$ and $K = 16$ elements.}
\label{fig: 1D density wave profile}
\end{figure}
\begin{table}[tbp]

\centering
\begin{tabular}{|c|c|c|c|c|}
    \hline
     $K$ & $L^2$ error $\bm{u}_{\text{ECAV}}$ & $L^2$ error $\bm{u}_{\text{DG }}$ & $L^2 \text{ norm of }\bm{u}_{\text{ECAV}} - \bm{u}_{\text{DG }}$ \\
     \hline
     8 & $3.142 \times 10^{-2}$ & $3.140 \times 10^{-2}$ & $6.979 \times 10^{-5}$ \\
     \hline
     16 & $8.835 \times 10^{-4}$ & $8.831 \times 10^{-4}$ & $2.690 \times 10^{-5}$ \\
     \hline
     24 & $7.503\times 10^{-5}$ & $7.438\times 10^{-5}$ & $9.754\times 10^{-6}$ \\
     \hline
\end{tabular}
\caption{Example \ref{ex: 1D Density Wave}: A table comparing the errors of the ECAV solution and the DG (no added viscosity) solution  and the $L^2$ norm of the difference between the two methods at the final time $t = 25$.}
\label{table:1D density errors}
\end{table}

\subsection{Shu-Osher problem} \label{ex: Shu Osher}

This example problem is the Shu-Osher sine-shock interaction problem from \cite{shu2009high} with initial conditions
$$
(\rho, u, p) = \begin{cases}
    (3.857143, 2.629369,10.3333) & x < -4\\
    (1 + 0.2\sin{(5x)},0,1) & x \geq -4,
\end{cases}
$$
on the domain $[-5,5]$ and a final time of $t = 1.8$.  We use $K = 100$ elements and polynomials of degree $N = 3$. \Cref{fig: Shu Osher dvPdv plot} is a plot of the time evolution of the quantity $\|\bm{\delta v}\|^2_{L^2(D^k)}/\|\Pi_N\bm{\delta v}\|^2_{L^2(D^k}$ from \Cref{lemma: epsilon bound} and we see that it remains close to one with a modal LDG formulation.  In \Cref{fig: Shu Osher dvPdv and density plots}, we can observe that the density profiles for local DG and BR-1 discretizations of ECAV are similar.

Interestingly, when using modal DG, the number of adaptive time steps when using LDG is 4218 compared to 7483 when using BR-1, although the maximum $\epsilon_k$ is often slightly larger for LDG.  When using a nodal DG discretization (e.g., a discontinuous Galerkin spectral element discretization with $(N+1)$-point Legendre-Gauss-Lobatto quadrature, which results in a diagonal mass matrix and collocation-like scheme), the number of time steps is roughly the same, 4063 for LDG and 4116 for BR-1. 

In \Cref{fig: Shu Osher epsilon evolution}, we compare the time evolution of the maximum value of $\epsilon_k$ for BR-1 and local DG with both modal and nodal DG formulations. Because the maximum values of $\epsilon_k$ over time are similar for both LDG and BR-1, this suggests that the magnitude of the ECAV coefficient is not responsible for the increased number of time-steps under a BR-1 discretization. 

We explored other possible explanations for this behavior, such as  comparing $\|\bm{\sigma}\|_{L^2(\omega)}$ from \eqref{eq: LDG 2} or comparing the norm of the ODE right-hand side over time. In all examples, we did not observe significant differences between LDG and BR-1. Because the difference in the number of time-steps is observed only for the modal formulation, we conjecture that the projection of the entropy variables is a factor. However, other differences between LDG and BR-1 \cite{alhawwary2019study} could also be a factor.


\begin{figure}[tbp]
\centering
\begin{subfigure}[t]{.49\textwidth}
\centering
    \includegraphics[width =\textwidth]{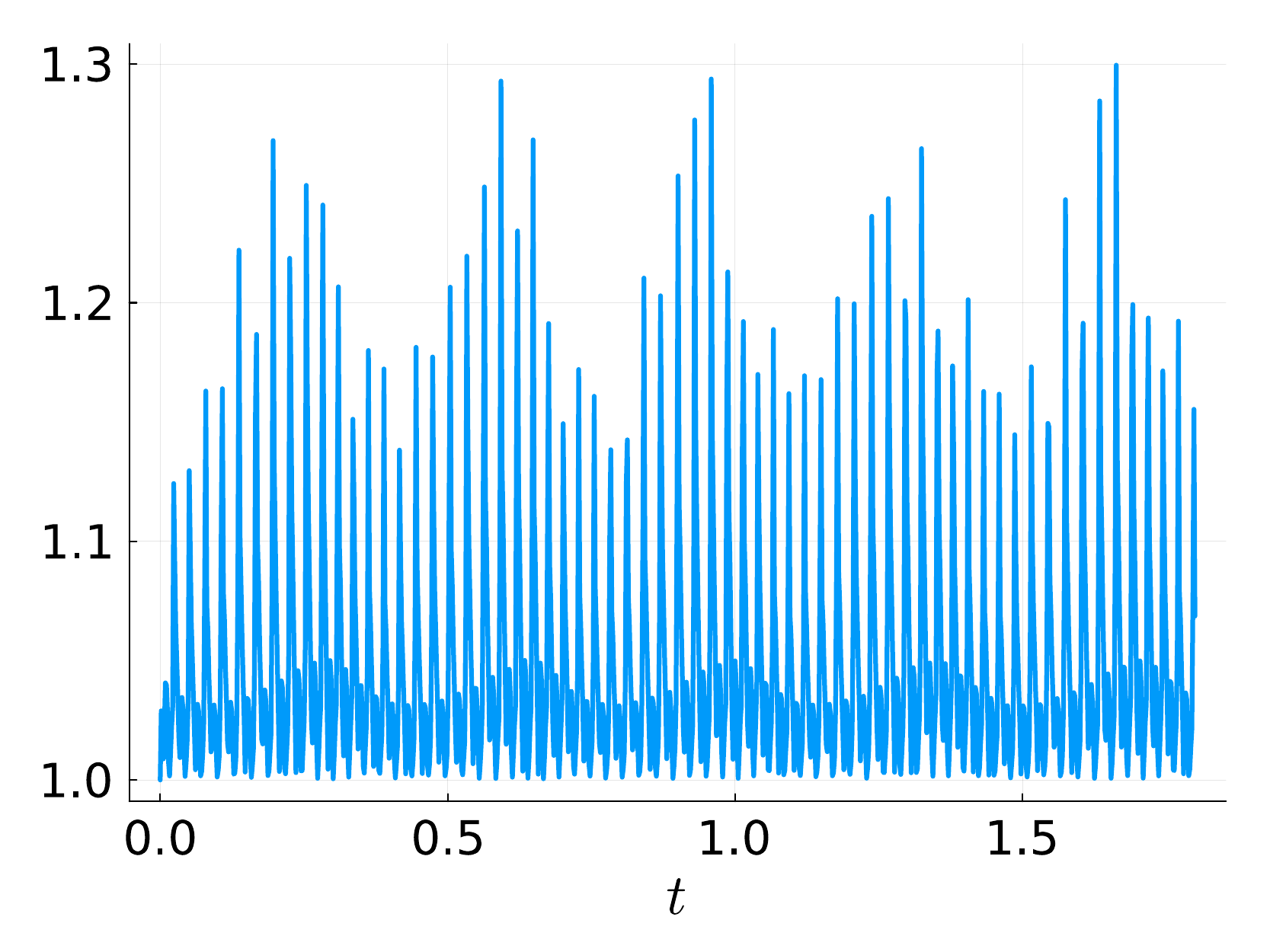}
    \caption{\centering $\|\bm{\delta v}\|^2_{L^2(D^k)}/\|\Pi_N\bm{\delta v}\|^2_{L^2(D^k)}$}
    \label{fig: Shu Osher dvPdv plot}
\end{subfigure}
\hfill
\begin{subfigure}[t]{0.49\textwidth}
\centering
     \includegraphics[width = \textwidth]{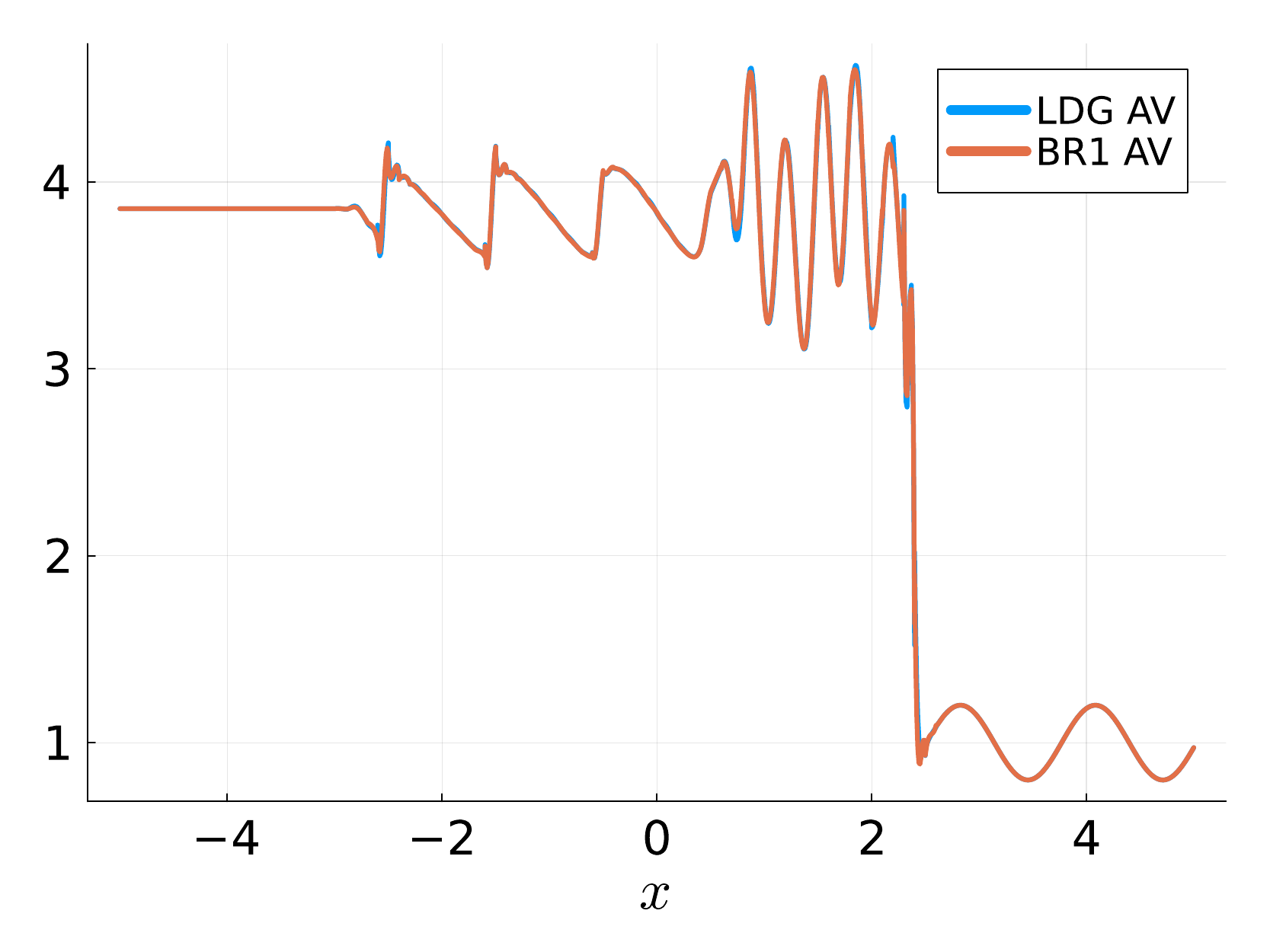}
     \caption{\centering Density profile}
    \label{fig: Shu Osher Density Profile}
\end{subfigure}
\caption{Example \ref{ex: Shu Osher}, Time evolution of $\|\bm{\delta v}\|^2_{L^2(D^k)}/\|\Pi_N\bm{\delta v}\|^2_{L^2(D^k)}$ from \Cref{lemma: epsilon bound} and density profile at the final time $t = 1.8$ for both the LDG and BR-1 EVAC discretizations with a modal formulation.}
\label{fig: Shu Osher dvPdv and density plots}
\end{figure}

\begin{figure}[tbp]
\centering
\begin{subfigure}[t]{.49\textwidth}
\centering
    \includegraphics[width =\textwidth]{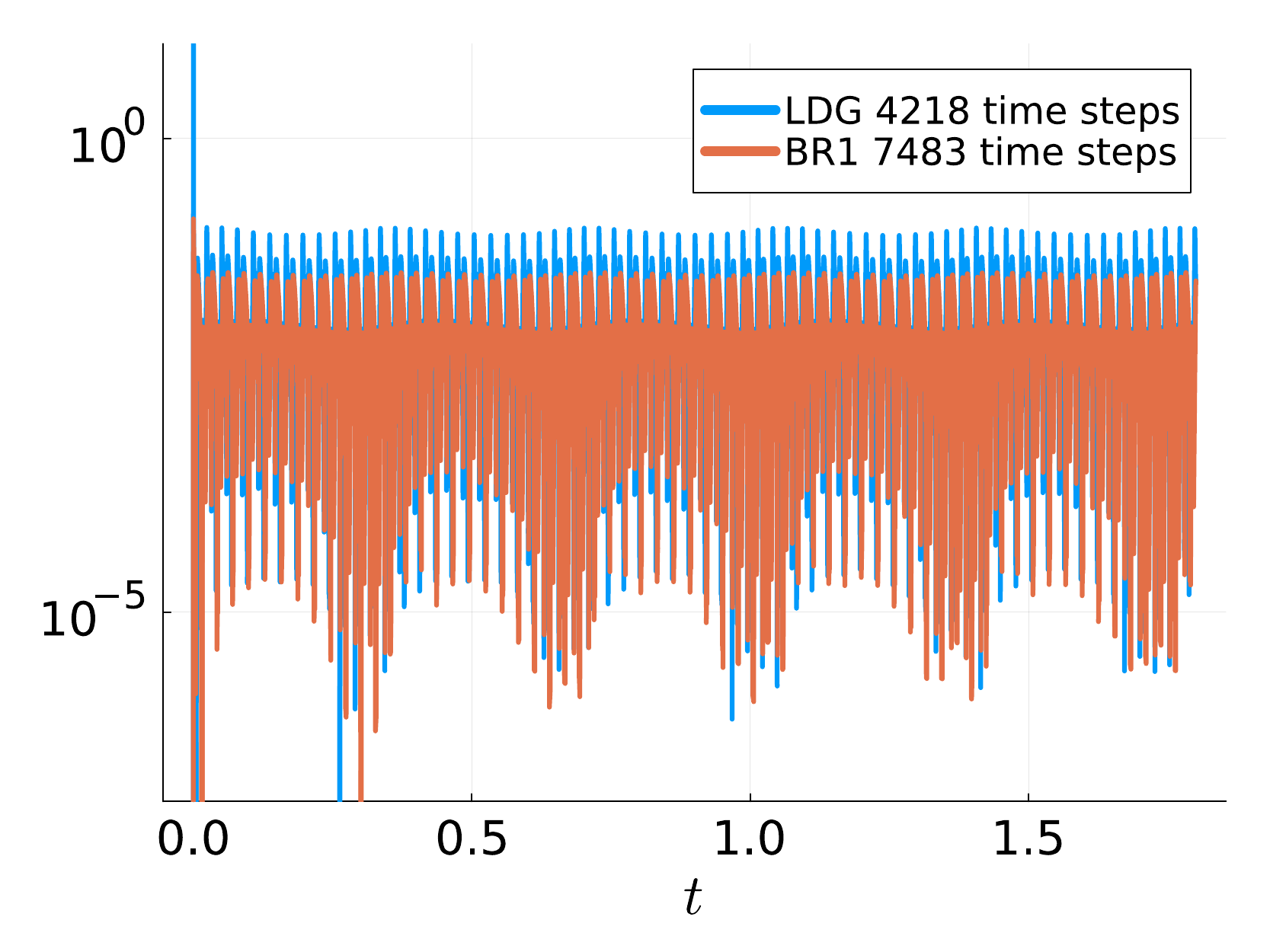}
    \caption{\centering Modal}
    \label{fig: Shu Osher Epsilon Modal}
\end{subfigure}
\hfill
\begin{subfigure}[t]{0.49\textwidth}
\centering
     \includegraphics[width = \textwidth]{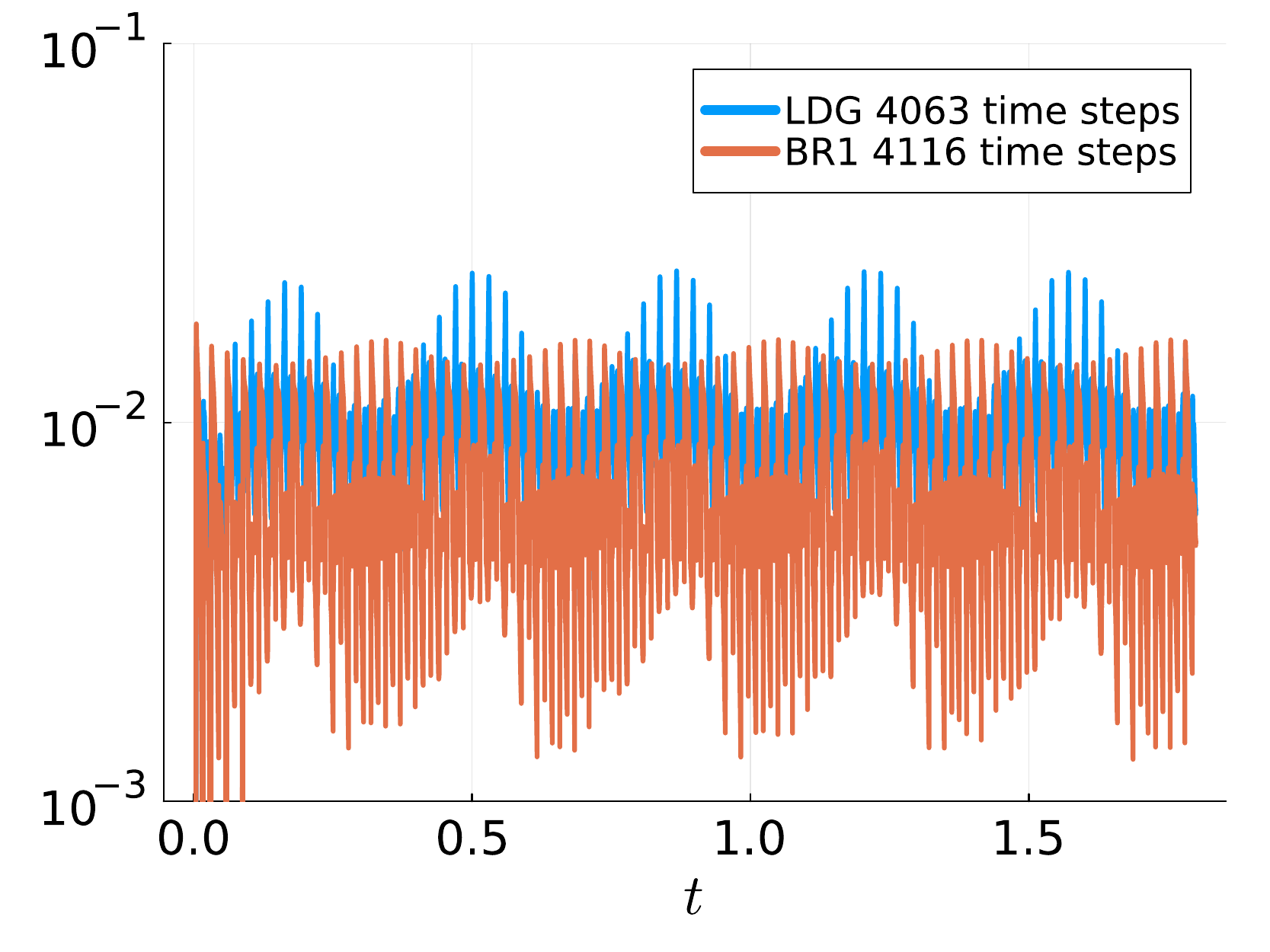}
     \caption{\centering Nodal}
    \label{fig: Shu Osher Density Nodal}
\end{subfigure}
\caption{Example \ref{ex: Shu Osher}, Time evolution of the  $\max_k \epsilon_k$ (semi-log plot) for modal and nodal formulations. }
\label{fig: Shu Osher epsilon evolution}
\end{figure}

\section{Conclusions}\label{sec:section_6}

In this paper, we analyze the behavior of entropy correction artificial viscosity (ECAV) \cite{chan2025artificial} under a local DG viscous discretization. We prove that, unlike BR-1 viscous discretizations, the norm of the DG gradients are bounded below, resulting in an $O(h)$ upper bound on the artificial viscosity coefficient. Additionally, we show that the resulting local DG viscous discretization satisfies the same entropy inequality derived in \cite{chan2025artificial} and is contact preserving.

Numerical experiments show that it is possible for the ECAV coefficient produced by the BR-1 viscous discretization to become arbitrarily large, resulting in a much smaller maximum stable time-step. We also compare against a more standard modal shock capturing method to illustrate that ECAV is minimally dissipative. Finally, we validate the high-order accuracy and entropy stability of the proposed method. 


\section{Acknowledgements}

The authors thank Keegan Kirk and Hendrik Ranocha for helpful discussions. Jesse Chan gratefully acknowledges support from National Science Foundation (NSF) under award
DMS-1943186. Samuel Van Fleet acknowledges support from the NSF under award DMS-223148. 

\bibliographystyle{plain}
\bibliography{reference.bib}

\end{document}